\newtheorem{theorem}{Theorem}[section]
\newtheorem{lemma}[theorem]{Lemma}
\newtheorem{proposition}[theorem]{Proposition}
\theoremstyle{definition}
\newtheorem{definition}[theorem]{Definition}
\newtheorem{remark}[theorem]{Remark}
\newtheorem{example}[theorem]{Example}
\numberwithin{equation}{section}
\begin{document}

%
%
%
%
%
%
%
%
%

\title[New characterizations of  weak CMP inverses ]
 {New characterizations of  weak CMP inverses }

\author[Shuxian Xu]{Shuxian Xu}

\address{%
	School of Mathematics\\        
	Southeast University\\
	Nanjing 210096\\
	China}

\email{shuxianxu215@163.com}

\author{Jianlong Chen}
\address{%
	School of Mathematics\\        
	Southeast University\\
	Nanjing 210096\\
	China}
\email{jlchen@seu.edu.cn}
\subjclass{Primary 15A09; Secondary 65F05}

\keywords{Minimal rank weak Drazin inverse; weak CMP inverse; weak MPD inverse; strong Bott-Duffin $(e,f)$-inverse.}

\date{September 9, 2025}

\begin{abstract}
In 2025, Mosi\'{c} defined the weak CMP inverse utilizing a minimal rank weak Drazin inverse  instead of the Drazin inverse. The weak CMP inverse is a new wider
class of generalized inverses, of which the CMP and MPCEP inverse
are particular cases. In this paper, we provide several expressions, along with a number of new characterizations and properties for the weak CMP inverse.  
Moreover, we investigate the relationships between the weak CMP inverse and some well-known generalized inverses, such as the Moore-Penrose inverse and weak MPD inverse.
Finally, we show that the weak CMP inverse, weak MPD inverse and weak DMP inverse are all strong Bott-Duffin $(e,f)$-inverses.
\end{abstract}

\maketitle

\section{Introduction}

Let $\mathbb{C}^{m\times n}$ be the set of $m\times n$ complex matrices. For  $A\in\mathbb{C}^{m\times n}$, the symbols ${\rm{rank}}(A)$, $A^*$ and $\mathcal{R}(A)$  denote the rank, the conjugate transpose and the range space of $A$, respectively. The index of $A\in\mathbb{C}^{n\times n}$, denoted by $\operatorname{ind}(A)$, is the smallest nonnegative integer $k$ for which $\operatorname{rank}(A^k)=\operatorname{rank}(A^{k+1})$. 

The Moore-Penrose inverse \cite{MP}  and the Drazin inverse \cite{D1958} both originated in the 1950s and are the most widely used generalized inverses. The Moore-Penrose inverse of \( A \in \mathbb{C}^{m \times n} \) is unique \( A^{\dagger} = X \in \mathbb{C}^{n \times m} \) satisfying \( AXA = A \), \( XAX = X \), \( (AX)^{*} = AX \) and \( (XA)^{*} = XA \) \cite{MP}. 
The Drazin inverse of \( A \in \mathbb{C}^{n \times n} \) with  \( \text{ind}(A) = k \), is unique \( A^D = X \in \mathbb{C}^{n \times n} \) such that \( XAX = X \), \( AX = XA \) and \( A^{k + 1}X = A^k \) \cite{D1958}. In particular, for \( \text{ind}(A) = 1 \), \( A^D \) reduces to the group inverse \( A^\# \).

The core inverse for a square matrix $A$ with index $1$ was introduced by Baksalary and
Trenkler \cite{BT} by three equations, denoted by $A^{\scriptsize\textcircled{\#}}$, and can be expressed as $A^{\scriptsize\textcircled{\#}}=A^{\#}AA^{\dag}$. As an extension of the core inverse, Malik and Thome \cite{DMP}  defined the DMP inverse for square matrices of arbitrary indexes using the Drazin inverse  and the Moore-Penrose inverse. For $A \in \mathbb{C}^{n\times n}$, the DMP inverse of $A$ is defined to be the matrix  $A^{D}AA^{\dag}$ and denoted by $A^{D,\dag}$,
Dually, the MPD inverse of \( A \) is given by \( A^{\dagger,D} = A^\dagger AA^{D} \).

The CMP inverse of \( A \in \mathbb{C}^{n \times n} \)  was introduced by Mehdipour and Salemi \cite{CMP} using the core part \( AA^D A \)  and the Moore-Penrose inverse $A^\dagger$ of \( A \), and is defined as \( A^{c,\dagger} = A^\dagger AA^D AA^\dagger \). Moreover, the CMP inverse of a square matrix is extended to a rectangular matrix in \cite{RCMP}, and to Hilbert space operators in \cite{HCMP, CMPH}. More details about CMP inverse are available in \cite{Zuo, F1, F2, Ma, CMP1, XSZ}.

Weak Drazin inverses and minimal rank weak Drazin inverses of complex matrices were proposed by 
Campbell and Meyer \cite{Campbell1978} in 1978. They are easier to
compute than Drazin inverses and can be used as a replacement for Drazin
inverses in many applications.
\begin{definition}\cite{Campbell1978}
	For $A \in \mathbb{C}^{n \times n}$, $X \in \mathbb{C}^{n \times n}$ is called a weak Drazin inverse of  $A$ if $XA^{k+1}=A^{k}$ for some  $k\in \mathbb{N}$. Furthermore,
	$X$ is called  a minimal rank weak Drazin inverse of  $A$ if $XA^{k+1}=A^{k}$ and rank$(X)=$ rank$(A^{D})$. Dually, a minimal rank right weak Drazin inverse of $A$  is a solution to
	$A^{k+1}X=A^{k}$ and rank$(X)=$ rank$(A^{D})$.	
\end{definition}

It was proved in \cite{WC2023} that many generalized inverses, such as the core
inverse, the DMP inverse, the weak group inverse \cite{Wang} and the weak core inverse \cite{F2, Zhou}
are special cases of minimal rank weak Drazin inverses. 

Replacing the Drazin inverse $A^{D}$ with a minimal rank weak Drazin inverse
of $A$,	weaker versions of MPD and DMP
inverses were presented in \cite{WDMP}.
Let \( A \in \mathbb{C}^{n \times n} \). If \( X \) is an arbitrary but fixed minimal rank weak Drazin inverse of \( A \), the weak MPD inverse \cite{WDMP} of \( A \) is expressed as $A^{w,\dagger,D} = A^\dagger XA$.
For an arbitrary but fixed minimal rank right weak Drazin inverse \( Z \) of \( A \), the weak DMP inverse \cite{WDMP} of \( A \)  is given by $A^{w,D,\dagger} = AZA^\dagger$.

Inspired by the fact that minimal rank weak Drazin inverses are easier to compute than Drazin inverses, and building on a series of studies on the CMP inverse as well as the concepts of weak DMP and MPD inverses, Mosi\'{c} \cite{WCMP} defined the weak CMP inverse using a minimal rank weak Drazin inverse instead of the Drazin inverse. This new class of generalized inverses extends the well-known CMP and MPCEP inverse \cite{MPCEP}.

\begin{definition}\cite{WCMP}
	Let \( A \in \mathbb{C}^{n \times n} \) and \( \text{ind}(A) = k \).
	For an arbitrary but fixed minimal rank weak Drazin inverse \( X \) of \( A \), the weak CMP inverse of \( A \) is given as
	\[
	A^{w,c,\dagger} = A^\dagger A X A A^\dagger.
	\]
\end{definition}

Motivated by previous work, in this paper, we present several expressions,  some new characterizations and properties for the weak CMP inverse. Besides, we explore the relationships between the weak CMP inverse and some existing generalized inverses, including the Moore-Penrose inverse and weak MPD inverse. Then, for a minimal rank weak Drazin inverse \( X \) of \( A \), we prove that $A^{\dag}AXAA^{\dag}=A^{D}=A^{D, \dag}=A^{\dag,D}$  is equivalent to $A^{\dag}AXA=AXAA^{\dag}$, and also equivalent to $A$ being left $k$-EP and $XAA^{\dag}=A^{D}$.
Finally, it is shown that the weak CMP inverse, weak MPD inverse and weak DMP inverse are all strong Bott-Duffin $(e,f)$-inverses.

	\section{Characterizations and  expressions for weak CMP inverse}

Recall the Hartwig-Spindelb$\ddot{\rm o}$ck decomposition \cite[Corollary 6]{HS}, which plays an important role in the following proof. For any $A \in \mathbb{C}^{n \times n}$ with rank $r$, there exists a unitary matrix $U \in \mathbb{C}^{n \times n}$ such that
\begin{equation}\label{AHS}
	A=U\left[ \begin{matrix}
		\varSigma K&		\varSigma L\\
		0&		0\\
	\end{matrix} \right]U^{*},
\end{equation}
where  $\varSigma =\text{diag}(\sigma_1I_{r_1},\sigma_2I_{r_2},\ldots,\sigma_tI_{r_t})$ is the diagonal matrix of singular values of $A$, $\sigma_1 > \sigma_2 > \cdots > \sigma_t > 0, r_1 + r_2 + \cdots + r_t = r$, 
and $K \in \mathbb{C}^{r \times r}$ and $L \in \mathbb{C}^{r \times (n-r)}$ satisfy 
$KK^{*}+LL^{*}=I_{r}.$ 

Let the decomposition of $A$ be of the form in equation \eqref{AHS}. Then, it follows from \cite{BT} that 
\begin{equation}\label{AMPHS}
	A^{\dag}=U\left[ \begin{matrix}
		K^{*}\varSigma^{-1}	&	0	\\
		L^{*}\varSigma^{-1}		&0		\\
	\end{matrix} \right] U^{*},	
\end{equation} 
and it was proved in \cite[Corollary 3.3]{WC2023} that, $X$ is a minimal rank weak Drazin inverse of $A$ if and only if
\begin{equation}\label{AXHS}
	X = U\begin{bmatrix}
		Z & W \\
		0 & 0
	\end{bmatrix}U^{*}
\end{equation}
for some minimal rank weak Drazin inverse $Z$ of $\Sigma K$ and some $W$ satisfying $\mathcal{R}(W)\subseteq\mathcal{R}(Z)$.

Based on the above discussion, we obtain the following expression for the weak CMP inverse of $A$ in terms of the Hartwig-Spindelb$\ddot{\rm o}$ck decomposition. 

\begin{theorem}\label{key}
	Let $A\in \mathbb{C}^{n\times n}$ be given by \eqref{AHS}. If $X$ is an arbitrary but fixed minimal rank weak Drazin inverse of $A$, then 
	
	\begin{equation}\label{ACMP}
		A^{w, c,\dag}= A^{\dag}AXAA^{\dag}=U\left[ \begin{matrix}
			K^{*}KZ	&	0	\\
			L^{*}KZ		& 0		\\
		\end{matrix} \right] U^{*}
	\end{equation} 
	for some minimal rank weak Drazin inverse $Z$ of $\Sigma K$.
	
\end{theorem}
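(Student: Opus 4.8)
The plan is to prove \eqref{ACMP} by substituting the three block representations \eqref{AHS}, \eqref{AMPHS}, and \eqref{AXHS} directly into the defining product $A^{\dag}AXAA^{\dag}$ and carrying out the block multiplication. Since $U$ is unitary, $U^{*}U=I_n$, so any product of factors of the form $UMU^{*}$ telescopes to $U(M_1M_2\cdots M_j)U^{*}$; the whole problem therefore reduces to multiplying the underlying $2\times 2$ block matrices. I would evaluate the product from right to left, since the rightmost factors simplify most cleanly.

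The key simplification is the computation of $AA^{\dag}$. Multiplying \eqref{AHS} by \eqref{AMPHS}, the $(1,1)$ block is $\Sigma KK^{*}\Sigma^{-1}+\Sigma LL^{*}\Sigma^{-1}=\Sigma(KK^{*}+LL^{*})\Sigma^{-1}$, which equals $I_r$ by the Hartwig--Spindelb\"ock relation $KK^{*}+LL^{*}=I_{r}$, while the remaining blocks vanish; hence $AA^{\dag}=U\left[\begin{smallmatrix} I_r & 0 \\ 0 & 0 \end{smallmatrix}\right]U^{*}$. Right-multiplying the weak Drazin inverse \eqref{AXHS} by this projector annihilates the free off-diagonal block $W$, giving $XAA^{\dag}=U\left[\begin{smallmatrix} Z & 0 \\ 0 & 0 \end{smallmatrix}\right]U^{*}$; in particular the final expression will depend only on $Z$ and not on the choice of $W$.

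Finishing is then immediate: left-multiplying by $A$ yields $AXAA^{\dag}=U\left[\begin{smallmatrix} \Sigma KZ & 0 \\ 0 & 0 \end{smallmatrix}\right]U^{*}$, and left-multiplying once more by $A^{\dag}$ from \eqref{AMPHS} produces the $(1,1)$ block $K^{*}\Sigma^{-1}\,\Sigma KZ=K^{*}KZ$ and the $(2,1)$ block $L^{*}\Sigma^{-1}\,\Sigma KZ=L^{*}KZ$, with the second block column identically zero. This is exactly the claimed form \eqref{ACMP}. I do not anticipate any real obstacle: the argument is a careful but routine block calculation, and the only points demanding attention are keeping track of the block sizes so that the zero blocks of dimension $n-r$ are retained correctly, and invoking $KK^{*}+LL^{*}=I_{r}$ at precisely the one step where $AA^{\dag}$ is simplified, since that single identity is what drives the entire reduction.
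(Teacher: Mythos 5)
Your proposal is correct and follows exactly the paper's own (one-line) proof: substituting \eqref{AHS}, \eqref{AMPHS} and \eqref{AXHS} into $A^{\dag}AXAA^{\dag}$ and multiplying the blocks, with the identity $KK^{*}+LL^{*}=I_{r}$ collapsing $AA^{\dag}$ to the orthogonal projector $U\left[\begin{smallmatrix} I_r & 0 \\ 0 & 0 \end{smallmatrix}\right]U^{*}$, which kills the $W$ block. Your write-up is in fact more detailed than the paper's, which simply asserts that the substitution works.
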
	

\begin{proof}
	It can be proved by substituting equations \eqref{AHS}, \eqref{AMPHS} and \eqref{AXHS} into $A^{\dag}AXAA^{\dag}$.
\end{proof}

The following result gives some properties of the weak CMP inverse of $A\in \mathbb{C}^{n\times n}$.	
\begin{proposition}
	Let $A\in \mathbb{C}^{n\times n}$ be given by \eqref{AHS} and $X$  be a minimal rank weak
	Drazin inverse of $A$. If $Y=A^{\dag}AXAA^{\dag}$, then the following statements hold:
	\begin{itemize}
		\item [{\rm (1)}] $Y=0$ if and only if $A$ is nilpotent;
		\item [{\rm (2)}]  $Y=A$ if and only if $A^{\dag}=A$;
		\item [{\rm (3)}]  $Y=A^{*}$ if and only if $A^{\dag}=A^{*}$ and ${\text ind}(A)=1$.
	\end{itemize}
\end{proposition}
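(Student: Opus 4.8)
The plan is to reduce each of the three equivalences to a blockwise comparison based on the expression for $Y=A^{\dag}AXAA^{\dag}$ furnished by Theorem \ref{key}, namely $Y=U\left[\begin{smallmatrix}K^{*}KZ & 0\\ L^{*}KZ & 0\end{smallmatrix}\right]U^{*}$ for some minimal rank weak Drazin inverse $Z$ of $B:=\Sigma K$, comparing it with \eqref{AHS}, \eqref{AMPHS}, and with $A^{*}=U\left[\begin{smallmatrix}K^{*}\Sigma & 0\\ L^{*}\Sigma & 0\end{smallmatrix}\right]U^{*}$. The structural fact I would exploit throughout is that $KK^{*}+LL^{*}=I_{r}$ makes $\left[\begin{smallmatrix}K^{*}\\ L^{*}\end{smallmatrix}\right]$ left invertible, so its left factor can be cancelled; since $\Sigma$ is invertible, this turns each comparison of the first block column into a single equation in $KZ$, equivalently in $BZ$. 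For (1), cancelling in $Y=0$ gives $KZ=0$, hence $BZ=0$; left-multiplying the defining relation $ZB^{m+1}=B^{m}$ by $B$ then yields $B^{m+1}=(BZ)B^{m+1}=0$, so $B$, and hence $A$, is nilpotent. The converse is immediate, since $A$ nilpotent gives $A^{D}=0$, whence $\operatorname{rank}(X)=\operatorname{rank}(A^{D})=0$, so $X=0$ and $Y=0$.

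For (2), I would first read the $(1,2)$ block of $Y=A$, which gives $\Sigma L=0$, i.e. $L=0$; then $KK^{*}=I_{r}$ forces $K$ to be unitary and $B=\Sigma K$ invertible. The $(1,1)$ block becomes $K^{*}KZ=\Sigma K$, i.e. $Z=B$, while invertibility of $B$ together with $ZB^{m+1}=B^{m}$ forces $Z=B^{-1}$; hence $B=B^{-1}$, that is $(\Sigma K)^{2}=I_{r}$. On the other hand, a direct comparison of \eqref{AHS} and \eqref{AMPHS} shows that $A^{\dag}=A$ holds exactly when $L=0$ and $\Sigma K=K^{*}\Sigma^{-1}$, and for unitary $K$ the latter identity is equivalent to $(\Sigma K)^{2}=I_{r}$. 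This gives both implications at once.

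For (3), using the block form of $A^{*}$ and cancelling $\left[\begin{smallmatrix}K^{*}\\ L^{*}\end{smallmatrix}\right]$ in $Y=A^{*}$ leaves the single equation $KZ=\Sigma$. Since $\Sigma$ is invertible, this forces $K$, and therefore $B=\Sigma K$, to be invertible, which is precisely $\operatorname{ind}(A)=1$; invertibility then gives $Z=B^{-1}=K^{-1}\Sigma^{-1}$, and substituting into $KZ=\Sigma$ yields $\Sigma^{-1}=\Sigma$, i.e. $\Sigma=I_{r}$, which by \eqref{AMPHS} is exactly $A^{\dag}=A^{*}$. Conversely, $A^{\dag}=A^{*}$ gives $\Sigma=I_{r}$ and $\operatorname{ind}(A)=1$ gives $K$ invertible, so $Z=K^{-1}$ and $KZ=I_{r}=\Sigma$, whence $Y=A^{*}$.

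The blockwise bookkeeping is routine; the main obstacle is converting the abstract hypothesis that $Z$ is a minimal rank weak Drazin inverse of $\Sigma K$ into a usable equation in each case. The resolution I rely on is that in (2) and (3) the side conditions ($L=0$, respectively $KZ=\Sigma$) force $B=\Sigma K$ to be invertible, collapsing the entire family of minimal rank weak Drazin inverses to the single element $Z=B^{-1}$, while in (1) the rank condition $\operatorname{rank}(Z)=\operatorname{rank}(A^{D})$ alone settles the converse. I expect the most delicate step to be verifying the two auxiliary identities $A^{\dag}=A\Leftrightarrow\{L=0,\ (\Sigma K)^{2}=I_{r}\}$ and $A^{\dag}=A^{*}\Leftrightarrow\Sigma=I_{r}$ cleanly from \eqref{AHS}--\eqref{AMPHS}.
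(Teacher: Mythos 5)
Your proposal is correct and follows essentially the same route as the paper: both reduce each equivalence to a blockwise comparison using the expression $Y=U\left[\begin{smallmatrix}K^{*}KZ & 0\\ L^{*}KZ & 0\end{smallmatrix}\right]U^{*}$ from Theorem \ref{key} together with the cancellation trick based on $KK^{*}+LL^{*}=I_{r}$. The only (harmless) deviations are that in (1) you derive nilpotency directly from $BZ=0$ and $ZB^{m+1}=B^{m}$ rather than via the outer-inverse and rank properties of $Z$, and in (3) you verify $A^{\dag}=A^{*}\Leftrightarrow\varSigma=I_{r}$ by hand instead of citing \cite[Lemma 1]{BT}, while the fact that $K$ invertible is equivalent to $\operatorname{ind}(A)=1$ is the same cited result \cite[Corollary 6]{HS} the paper uses.
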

\begin{proof}
	
	(1) If	$Y=0$, by \eqref{ACMP}, $K^{*}KZ=0$ and $L^{*}KZ=0$. Pre-multiplying  by $K$ and $L$ on these two equations respectively, and using $KK^{*}+LL^{*}=I_{r}$, we obtain $KZ=0$. Since $Z$ is a minimal rank weak Drazin inverse  of $\Sigma K$, it follows that $Z=Z(\Sigma K)Z=0$. Thus, $\Sigma K$ is nilpotent and hence $A$ is nilpotent. Conversely, if $A$ is nilpotent, then $X=0$, and hence $Y=0$.
	
	(2) If $Y=A$, then by \eqref{AHS} and \eqref{ACMP}, we have $K^{*}KZ=\Sigma K$ and $\Sigma L=0$. Since  $\Sigma$ is nonsingular and $KK^{*}+LL^{*}=I_{r}$, it follows that $L=0$ and $KK^{*}=I_{r}$. Thus, $\Sigma K$ is nonsingular and $K^{*}\varSigma^{-1}=K^{*}K(\varSigma K)^{-1}=K^{*}KZ=\Sigma K$. Therefore, $A^{\dag}=A$. Conversely, if $A^{\dag}=A$, then by \eqref{AHS} and \eqref{AMPHS}, $K^{*}\varSigma^{-1}=\Sigma K$, $L=0$ and $KK^{*}=I_{r}$. Thus, $\Sigma K$ is nonsingular and $Z=\varSigma K$.
	
	(3) If $Y=A^{*}$, then by \eqref{AHS} and \eqref{ACMP}, we have $K^{*}KZ=K^{*}\Sigma$ and $L^{*}KZ=L^{*}\Sigma$. Pre-multiplying  by $K$ and $L$ on these two equations respectively, and using $KK^{*}+LL^{*}=I_{r}$, we obtain $KZ=\Sigma$. Thus, $K$ is nonsingular, and hence $\Sigma=K(\varSigma K)^{-1}=\varSigma^{-1}$, which implies that $\varSigma=I_{r}$. Therefore, $Y=A^{*}$ if and only if $K$ is nonsingular and $\varSigma=I_{r}$. By \cite[Corollary 6]{HS}, $K$ is invertible if and only if ${\text {ind}}(A)=1$. And by \cite[Lemma 1]{BT}, $\varSigma=I_{r}$ if and only if $A^{\dag}=A^{*}$. Therefore, $Y=A^{*}$ if and only if $A^{\dag}=A^{*}$ and 
	${\text {ind}}(A)=1$.

\end{proof}

\begin{lemma}\label{A+}
	Let $A\in \mathbb{C}^{n\times n}$ be given by \eqref{AHS}. Then $X_1$ is a minimal rank weak
	Drazin inverse of $A^{\dag}$ if and only if 
	$X_1 = U\begin{bmatrix}
		Z_1 & 0 \\
		W_1 & 0
	\end{bmatrix}U^{*}$
	for some minimal rank weak Drazin inverse $Z_1$ of $K^{*}\Sigma^{-1} $ and some $W_1$ satisfying $\mathcal{R}({W_1}^{*})\subseteq\mathcal{R}({Z_1}^{*})$. 
\end{lemma}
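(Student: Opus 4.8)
The plan is to reduce the statement to the already-quoted characterization \eqref{AXHS} by passing to conjugate transposes, exploiting that $A^{\dag}$ is concentrated in its first $r$ block-columns while \eqref{AXHS} is adapted to matrices concentrated in their first $r$ block-rows. First I would substitute \eqref{AMPHS} to record
\[
A^{\dag}=U\begin{bmatrix}K^{*}\Sigma^{-1}&0\\ L^{*}\Sigma^{-1}&0\end{bmatrix}U^{*},\qquad
(A^{\dag})^{*}=(A^{*})^{\dag}=U\begin{bmatrix}\Sigma^{-1}K&\Sigma^{-1}L\\ 0&0\end{bmatrix}U^{*},
\]
so that $(A^{\dag})^{*}$ has precisely the block shape of \eqref{AHS}, with $\Sigma^{-1}K$ in the role of the top-left block. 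A short computation also gives $(A^{\dag})^{m}=U\bigl[\begin{smallmatrix}(K^{*}\Sigma^{-1})^{m}&0\\ L^{*}\Sigma^{-1}(K^{*}\Sigma^{-1})^{m-1}&0\end{smallmatrix}\bigr]U^{*}$, from which $\operatorname{rank}\bigl((A^{\dag})^{D}\bigr)=\operatorname{rank}\bigl((K^{*}\Sigma^{-1})^{D}\bigr)$, using $KK^{*}+LL^{*}=I_{r}$ and the invertibility of $\Sigma$.

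The second step is a left/right duality. Conjugating $X_{1}(A^{\dag})^{k+1}=(A^{\dag})^{k}$ yields $\bigl((A^{\dag})^{*}\bigr)^{k+1}X_{1}^{*}=\bigl((A^{\dag})^{*}\bigr)^{k}$, and since $\operatorname{rank}(X_{1}^{*})=\operatorname{rank}(X_{1})$ and $\bigl((A^{\dag})^{D}\bigr)^{*}=\bigl((A^{\dag})^{*}\bigr)^{D}$, this shows that $X_{1}$ is a minimal rank weak Drazin inverse of $A^{\dag}$ if and only if $X_{1}^{*}$ is a minimal rank \emph{right} weak Drazin inverse of $(A^{\dag})^{*}$. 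I would then characterize the latter through the right-hand analogue of \eqref{AXHS}: writing $X_{1}^{*}=U\bigl[\begin{smallmatrix}\alpha&\beta\\ \gamma&\delta\end{smallmatrix}\bigr]U^{*}$ and inserting the block power of $(A^{\dag})^{*}$, the defining equation collapses to block relations in which the rank constraint forces the lower blocks $\gamma,\delta$ to vanish and identifies $\alpha=Z_{1}^{*}$ as a minimal rank right weak Drazin inverse of $\Sigma^{-1}K$, with the top-right block constrained by $\mathcal{R}(\beta)\subseteq\mathcal{R}(\alpha)$. Equivalently, dualizing once more, $Z_{1}$ is a minimal rank weak Drazin inverse of $(\Sigma^{-1}K)^{*}=K^{*}\Sigma^{-1}$.

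Conjugating back, the top-right block $\beta$ with $\mathcal{R}(\beta)\subseteq\mathcal{R}(Z_{1}^{*})$ becomes a bottom-left block $W_{1}=\beta^{*}$ with $\mathcal{R}(W_{1}^{*})\subseteq\mathcal{R}(Z_{1}^{*})$, producing the asserted form $X_{1}=U\bigl[\begin{smallmatrix}Z_{1}&0\\ W_{1}&0\end{smallmatrix}\bigr]U^{*}$; the converse direction I would verify by substituting this form directly into $X_{1}(A^{\dag})^{k+1}=(A^{\dag})^{k}$. The step I expect to be the main obstacle is the right-hand analogue of \eqref{AXHS} invoked above: because \eqref{AXHS} is stated only for (left) weak Drazin inverses, its right-sided version must be produced separately, and the genuinely delicate point is to show that minimality of the rank is exactly what annihilates the two lower blocks and leaves precisely the inclusion $\mathcal{R}(W_{1}^{*})\subseteq\mathcal{R}(Z_{1}^{*})$ rather than a larger family of solutions. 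This is where the orthogonality relation $KK^{*}+LL^{*}=I_{r}$ and the invertibility of $\Sigma$ have to be used to control the coupling between the two block-columns, and it is the place where the argument needs the most care.
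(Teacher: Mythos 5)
The paper's own proof is a one-line citation: it applies \cite[Proposition 3.2]{WC2023} directly to the column block form \eqref{AMPHS}, with no duality argument, so your route is genuinely different. Your two duality steps are themselves correct: $X_{1}$ is a minimal rank weak Drazin inverse of $A^{\dag}$ exactly when $X_{1}^{*}$ is a minimal rank \emph{right} weak Drazin inverse of $(A^{\dag})^{*}$, and $Z_{1}^{*}$ is a minimal rank right weak Drazin inverse of $\Sigma^{-1}K$ exactly when $Z_{1}$ is a minimal rank weak Drazin inverse of $K^{*}\Sigma^{-1}$. The gap is the step you defer to the end, and it is not merely delicate: the ``right-hand analogue of \eqref{AXHS}'' that you need is false. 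The reason is a range/kernel asymmetry. From $XM^{k+1}=M^{k}$ and $\operatorname{rank}(X)=\operatorname{rank}(M^{k})$ one gets $\mathcal{R}(X)=\mathcal{R}(M^{k})$; when $M$ has zero second block row this range lies in the first $r$ coordinates, and \emph{that} is what kills the bottom blocks in \eqref{AXHS}. Dually, a minimal rank \emph{right} weak Drazin inverse satisfies $\mathcal{N}(X)=\mathcal{N}(M^{k})$, a condition on kernels, which can annihilate block \emph{columns} but never block rows. So for the row-form matrix $(A^{\dag})^{*}$ nothing forces the lower blocks $\gamma,\delta$ of $X_{1}^{*}$ to vanish. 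Concretely, let $A=\operatorname{diag}(1,0)$, so that $U=I$, $\Sigma=K=1$, $L=0$ and $A^{\dag}=(A^{\dag})^{*}=A$; since $A$ is idempotent, the right weak Drazin equation reduces to $AY=A$, and together with $\operatorname{rank}(Y)=1$ it shows that the minimal rank right weak Drazin inverses of $(A^{\dag})^{*}$ are exactly
\[
Y=\begin{bmatrix}1&0\\ c&0\end{bmatrix},\qquad c\in\mathbb{C},
\]
with arbitrary lower-left block.

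Because your duality steps are equivalences, this failure is not local to your method: it shows the characterization cannot hold as literally stated with the paper's left-handed convention $X_{1}(A^{\dag})^{k+1}=(A^{\dag})^{k}$. For $A=\operatorname{diag}(1,0)$ the matrix $A^{\dag}=A$ is itself of the form \eqref{AHS}, so \eqref{AXHS} already describes its minimal rank weak Drazin inverses as $\bigl[\begin{smallmatrix}1&t\\0&0\end{smallmatrix}\bigr]$, $t\in\mathbb{C}$, whereas the asserted column form produces, e.g., $\bigl[\begin{smallmatrix}1&0\\1&0\end{smallmatrix}\bigr]$, which is not a weak Drazin inverse of $A^{\dag}$ at all. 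What your dualization of \eqref{AXHS} genuinely proves is the right-handed statement: $X_{1}$ is a minimal rank \emph{right} weak Drazin inverse of $A^{\dag}$ if and only if $X_{1}=U\bigl[\begin{smallmatrix}Z_{1}&0\\ W_{1}&0\end{smallmatrix}\bigr]U^{*}$ with $Z_{1}$ a minimal rank \emph{right} weak Drazin inverse of $K^{*}\Sigma^{-1}$ and $\mathcal{R}(W_{1}^{*})\subseteq\mathcal{R}(Z_{1}^{*})$. That is the only version reachable by your argument (or by any argument, given the counterexample), and the left/right bookkeeping you postponed as a technicality to be checked later is exactly where the proposal breaks down.
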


\begin{proof}
	It can be derived by \eqref{AMPHS} and \cite[Proposition 3.2]{WC2023}.
\end{proof}

Applying Lemma \ref{A+}, we obtain the  relationship between the weak CMP inverses of $A$ and $A^{\dag}$.  
\begin{proposition}
	Let $A\in \mathbb{C}^{n\times n}$ be given by \eqref{AHS},  $X$ and $X_1$  be  minimal rank weak
	Drazin inverses of $A$ and $A^{\dag}$, respectively. Then $Y=A^{\dag}AXAA^{\dag}$ and $Y_1=AA^{\dag}X_{1}A^{\dag}A$ are weak CMP inverses of $A$ and $A^{\dag}$, respectively, and the following statements are equivalent:
	\begin{itemize}
		\item [{\rm (1)}] $Y^{\dag}=Y_1$;
		\item [{\rm (2)}]  $(KZ)^{\dag}=Z_1K^{*}$, where $Z$ and $Z_1$  are  minimal rank weak
		Drazin inverses of $\Sigma K$ and $K^{*}\Sigma^{-1}$, respectively.
	\end{itemize}
\end{proposition}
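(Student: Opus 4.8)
The two opening assertions are immediate. By definition $Y=A^\dagger AXAA^\dagger$ is the weak CMP inverse of $A$, and since $(A^\dagger)^\dagger=A$ we have $Y_1=AA^\dagger X_1A^\dagger A=(A^\dagger)^\dagger A^\dagger X_1A^\dagger(A^\dagger)^\dagger$, which is the weak CMP inverse of $A^\dagger$ because Lemma \ref{A+} guarantees that $X_1$ is a minimal rank weak Drazin inverse of $A^\dagger$. The content of the proposition is the equivalence $(1)\Leftrightarrow(2)$, and my plan is to write both $Y_1$ and $Y^\dagger$ explicitly in the Hartwig-Spindelb\"{o}ck coordinates and then compare them block by block.

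First I would record the two relevant projectors. Multiplying \eqref{AHS} and \eqref{AMPHS} and using $KK^*+LL^*=I_r$ gives
\[
AA^\dagger=U\begin{bmatrix} I_r & 0 \\ 0 & 0\end{bmatrix}U^*,\qquad A^\dagger A=U\begin{bmatrix} K^*K & K^*L \\ L^*K & L^*L\end{bmatrix}U^*.
\]
Substituting the form of $X_1$ from Lemma \ref{A+}, the left factor $AA^\dagger$ annihilates the bottom row of $X_1$, and a short computation collapses $Y_1=AA^\dagger X_1A^\dagger A$ to
\[
Y_1=U\begin{bmatrix} Z_1K^*K & Z_1K^*L \\ 0 & 0\end{bmatrix}U^*.
\]

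The crux is the evaluation of $Y^\dagger$, which I expect to be the only genuine obstacle. I would factor \eqref{ACMP} as $Y=UQ(KZ)RU^*$ with $Q=\begin{bmatrix}K^*\\ L^*\end{bmatrix}$ and $R=\begin{bmatrix}I_r & 0\end{bmatrix}$. The key observation is that $Q^*Q=KK^*+LL^*=I_r$, so $Q$ is an isometry, while $RR^*=I_r$, so $R$ is a coisometry. For such factors the reverse-order law for the Moore-Penrose inverse holds in the clean form $(QS)^\dagger=S^\dagger Q^*$ and $(SR)^\dagger=R^*S^\dagger$, which I would justify either by citing the isometry/coisometry case of the reverse-order law or, to stay self-contained, by verifying the four Penrose equations directly (this is short precisely because $Q^*Q$ and $RR^*$ are identities). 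Combined with the unitarity of $U$, these give
\[
Y^\dagger=UR^*(KZ)^\dagger Q^*U^*=U\begin{bmatrix} (KZ)^\dagger K & (KZ)^\dagger L \\ 0 & 0\end{bmatrix}U^*.
\]

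Finally I would compare the two block forms. Equality $Y^\dagger=Y_1$ amounts to $(KZ)^\dagger K=Z_1K^*K$ together with $(KZ)^\dagger L=Z_1K^*L$, that is, $\bigl((KZ)^\dagger-Z_1K^*\bigr)\begin{bmatrix}K & L\end{bmatrix}=0$. Right-multiplying this identity by $\begin{bmatrix}K^*\\ L^*\end{bmatrix}$ and using $KK^*+LL^*=I_r$ once more forces $(KZ)^\dagger-Z_1K^*=0$, so $Y^\dagger=Y_1$ implies $(KZ)^\dagger=Z_1K^*$; the reverse implication is trivial. This establishes $(1)\Leftrightarrow(2)$.
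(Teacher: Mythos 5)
Your proof is correct and follows essentially the same route as the paper: both compute $Y_1$ and $Y^\dagger$ in Hartwig--Spindelb\"{o}ck coordinates and then strip the right factor $\begin{bmatrix}K & L\end{bmatrix}$ using $KK^*+LL^*=I_r$ to reach $(KZ)^\dagger=Z_1K^*$. The only difference is that where the paper asserts the formula for $Y^\dagger$ as ``a direct calculation,'' you justify it cleanly via the isometry/coisometry factorization $Y=UQ(KZ)RU^*$ and the corresponding reverse-order laws, which is a welcome explicit verification of that step.
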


\begin{proof}
	First, by Theorem \ref{key}, the weak CMP inverse of $A$ has the form as 
	\begin{align*}
		Y= A^{\dag}AXAA^{\dag}=U\left[ \begin{matrix}
			K^{*}KZ	&	0	\\
			L^{*}KZ		& 0		\\
		\end{matrix} \right] U^{*}	
	\end{align*}
	for some minimal rank weak Drazin inverse $Z$ of $\Sigma K$.
	A direct calculation shows that
	\begin{align*}
		Y^{\dag}= U\left[ \begin{matrix}
			(KZ)^{\dag}K	&	(KZ)^{\dag}L	\\
			0	& 0		\\
		\end{matrix} \right] U^{*}.
	\end{align*} 	 
	In addition, by \eqref{AMPHS} and lemma \ref{A+}, 	
	\begin{align*}
		Y_1=AA^{\dag}X_{1}A^{\dag}A
		=U\left[ \begin{matrix}
			Z_{1}K^{*}K	&	Z_{1}K^{*}L	\\
			0	& 0		\\
		\end{matrix} \right] U^{*}
	\end{align*}
	for some minimal rank weak Drazin inverse $Z_1$ of $K^{*}\Sigma^{-1} $.
	Thus, $Y^{\dag}=Y_1$ if and only if $(KZ)^{\dag}K=Z_{1}K^{*}K$ and $(KZ)^{\dag}L=Z_{1}K^{*}L$. Post-multiplying by $K^{*}$ and $L^{*}$  on  these two equations respectively, and using $KK^{*}+LL^{*}=I_{r}$, we obtain $(KZ)^{\dag}=Z_1K^{*}$. 
	Therefore, $Y^{\dag}=Y_1$ if and only if $(KZ)^{\dag}=Z_1K^{*}$.
	
\end{proof}

In \cite[Theorem 3.2]{WCMP}, Mosi\'{c} gave a novel  expression for the weak CMP inverse $Y$ of $A$ based on projections $C=I-AY$ and $D=I-YA$ by using core-EP decomposition. Now, we give a simple proof of Theorem 3.2 in \cite{WCMP}.

\begin{proposition}{\rm{\cite{WCMP}}}\label{CD}
	Let \( A \in \mathbb{C}^{n \times n} \), \( \text{ind}(A) = k \) and \( X \) be a minimal rank weak Drazin inverse of \( A \). 
	For \( Y = A^{\dag}AXAA^{\dag} \), \( C = I - AY \) and \( D = I - YA \), matrices \( A \pm C \) are invertible and
	\[Y = (I - D)(A \pm C)^{-1}(I - C). \]
\end{proposition}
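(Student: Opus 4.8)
The plan is to prove both claims by computing everything in the Hartwig--Spindelb\"ock coordinates of \eqref{AHS}, using the explicit block form of $Y$ from Theorem~\ref{key}. Write $B=\Sigma K$ and let $Z$ be the minimal rank weak Drazin inverse of $B$ attached to $X$ through \eqref{AXHS}. First I would record the two relevant products
\[
AY=U\begin{bmatrix} BZ & 0\\ 0 & 0\end{bmatrix}U^{*},\qquad
YA=U\begin{bmatrix} K^{*}\\ L^{*}\end{bmatrix}(KZ\Sigma)\begin{bmatrix} K & L\end{bmatrix}U^{*},
\]
the first obtained from $KK^{*}+LL^{*}=I_{r}$ and the second by factoring the block product. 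Because $BZ$ is idempotent and $KZ\Sigma=\Sigma^{-1}(BZ)\Sigma$ is similar to $BZ$, both $AY$ and $YA$ are idempotent; hence $C=I-AY$ and $D=I-YA$ satisfy $I-C=AY$ and $I-D=YA$, and the target identity becomes $Y=YA\,(A\pm C)^{-1}\,AY$.

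Next I would substitute these forms to obtain
\[
A\pm C=U\begin{bmatrix} B\pm G & \Sigma L\\ 0 & \pm I_{n-r}\end{bmatrix}U^{*},\qquad G:=I_{r}-BZ,
\]
which is block upper triangular, so its invertibility reduces to that of the top-left block $B\pm G$. This is the step I expect to be the main obstacle. I would settle it with the core-nilpotent decomposition of $B$: in a basis adapted to $\mathbb{C}^{r}=\mathcal{R}(B^{p})\oplus\mathcal{N}(B^{p})$ (with $p=\operatorname{ind}(B)$) one has $B=\operatorname{diag}(C_{B},N_{B})$, where $C_{B}$ is invertible and $N_{B}$ is nilpotent, and the defining relations of a minimal rank weak Drazin inverse force $Z=\begin{bmatrix} C_{B}^{-1} & Z_{12}\\ 0 & 0\end{bmatrix}$. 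A short computation then makes $B\pm G$ block upper triangular with diagonal blocks $C_{B}$ and $N_{B}\pm I$, both invertible (the latter since $N_{B}$ is nilpotent); hence $A\pm C$ is invertible for each sign.

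Finally, to verify the formula I would read off $(A\pm C)^{-1}$ from the triangular structure and multiply out $YA\,(A\pm C)^{-1}\,AY$ in block form. Right-multiplication by $AY=U\operatorname{diag}(BZ,0)U^{*}$ discards all but the top-left block of $(A\pm C)^{-1}$, and after cancelling the common left factor $U\begin{bmatrix}K^{*}\\ L^{*}\end{bmatrix}$ and using $\Sigma K=B$, the whole identity collapses to the single block claim $ZB\,(B\pm G)^{-1}=Z$. In the core-nilpotent basis this is immediate from $(N_{B}\pm I)^{-1}(N_{B}\pm I)=I$, and combined with the outer-inverse relation $ZBZ=Z$ (valid for every minimal rank weak Drazin inverse, and already used above) it yields $YA\,(A\pm C)^{-1}\,AY=Y$, as required. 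The only delicate bookkeeping is tracking the two signs through the triangular inverse, but they propagate consistently, so both $A+C$ and $A-C$ return the same $Y$.
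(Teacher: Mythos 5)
Your proof is correct, but it takes a genuinely different route from the paper's. You argue entirely in the Hartwig--Spindelb\"ock coordinates of \eqref{AHS}: invertibility of $A\pm C$ is reduced to the $r\times r$ block $B\pm G$, $G=I_r-BZ$, $B=\Sigma K$, which you settle via the core-nilpotent decomposition of $B$ and the forced form $Z=\bigl[\begin{smallmatrix}C_B^{-1}&Z_{12}\\0&0\end{smallmatrix}\bigr]$, and then you verify the formula blockwise; all of these steps check out (the block computation in fact collapses to $ZB(B\pm G)^{-1}BZ=Z$, which your two facts $ZB(B\pm G)^{-1}=Z$ and $ZBZ=Z$ together deliver). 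The paper's proof is coordinate-free and much shorter: it observes that $I-C=AY=AXAA^{\dagger}$, checks in two lines that $XAA^{\dagger}$ is itself a minimal rank weak Drazin inverse of $A$, and then quotes \cite[Theorem 3.10]{XSX}, which asserts that $A\pm(I-AW)$ is invertible whenever $W$ is a minimal rank weak Drazin inverse of $A$ and yields the explicit inverses $(A\pm C)^{-1}=XAA^{\dagger}\pm(I-XA)\sum_{i=0}^{k-1}(\mp A)^{i}$; the desired identity then follows from a three-line computation using $XAX=X$. What each approach buys: yours is self-contained (in effect you re-prove the relevant special case of the cited theorem by explicit computation), while the paper's produces closed-form polynomial expressions for $(A\pm C)^{-1}$ in $A$, $X$, $A^{\dagger}$ and carries over to the semigroup/ring setting of \cite{XSX}, where no unitary or core-nilpotent decomposition is available. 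Two harmless slips in your write-up: $I-C=AY$ and $I-D=YA$ hold by the definitions of $C$ and $D$, not as a consequence of idempotency of $AY$ and $YA$; and the ``single block claim'' your identity collapses to is $ZB(B\pm G)^{-1}BZ=Z$ rather than $ZB(B\pm G)^{-1}=Z$ alone, though you do supply both ingredients needed to conclude.
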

\begin{proof}
	First, we have $I - C=AY=AXAA^{\dag}$.
	Since $(XAA^{\dag})A^{k+1}=XA^{k+1}=A^{k}$ and 
	$A(XAA^{\dag})(XAA^{\dag})=A(XAA^{\dag})AX^{2}AA^{\dag}=AXAX^{2}AA^{\dag}=XAA^{\dag}$, it follows that $XAA^{\dag}$ is a minimal rank weak Drazin inverse of \( A \). Then by \cite[Theorem 3.10]{XSX}, we have $A \pm C$ are invertible with
	\begin{align*}
		&(A+C)^{-1}=\left(A+(I-AXAA^{\dag})\right)^{-1}\\
		&=XAA^{\dag}+(I-XAA^{\dag}A)\sum\limits_{i=0}\limits^{k-1}{(-A)^{i}}
		=XAA^{\dag}+(I-XA)\sum\limits_{i=0}\limits^{k-1}{(-A)^{i}},\\
		&(A-C)^{-1}=\left(A-(I-AXAA^{\dag})\right)^{-1}\\
		&=XAA^{\dag}-(I-XAA^{\dag}A)\sum\limits_{i=0}\limits^{k-1}{A^{i}}
		=XAA^{\dag}-(I-XA)\sum\limits_{i=0}\limits^{k-1}{A^{i}}.
	\end{align*}
	Thus,	
	\begin{align*}
		&(I - D)(A + C)^{-1}(I - C)\\
		&=A^{\dag}AXA\left(XAA^{\dag}+(I-XA)\sum\limits_{i=0}\limits^{k-1}{(-A)^{i}}\right)AXAA^{\dag}\\
		&=A^{\dag}AXAXAA^{\dag}AXAA^{\dag}=A^{\dag}AXAA^{\dag}=Y.
	\end{align*}
	Similarly, it can be verified that 	$(I - D)(A - C)^{-1}(I - C)=Y$.
\end{proof}

\begin{remark}
	Recall the Jacobson's lemma \cite{Jacobson}: Let $A, B \in \mathbb{C}^{n \times n}$. If $I_{n}-AB$ is invertible, then so is $I_{n}-BA$ and $(I_{n}-BA)^{-1}=I_{n}+B(I_{n}-AB)^{-1}A$. In Proposition \ref{CD}, since \( A \pm C \) are invertible, by Jacobson's lemma, \( A \pm D \) are also invertible, with
	\begin{align*}
		(A+D)^{-1}&=\left(A+(I-YA)\right)^{-1}\\
		&=I+(Y-I)\left(A+(I-AY)\right)^{-1}A\\
		&=I+(Y-I)(A+C)^{-1}A,\\
		(A-D)^{-1}&=\left(A-(I-YA)\right)^{-1}\\
		&=-I+(Y+I)\left(A-(I-AY)\right)^{-1}A\\
		&=-I+(Y+I)(A-C)^{-1}A.
	\end{align*}
\end{remark}
Let \( A \in \mathbb{C}^{n \times n} \) and \( \text{ind}(A) = k \). For a minimal rank weak Drazin inverse \( X \) of \( A \), it was proved in \cite[Lemma 2.1]{WCMP} that the weak CMP inverse
\( Y = A^{\dagger}AXAA^{\dagger} \) satisfies
$AY = P_{\mathcal{R}(A^k), \mathcal{N}(XAA^{\dagger})}$ and
$YA = P_{\mathcal{R}(A^{\dagger}A^k), \mathcal{N}(XA)}$. Now,
the following result further reveals the relationship among them.

\begin{proposition}
	Let \( A \in \mathbb{C}^{n \times n} \) and \( \text{ind}(A) = k \). For a minimal rank weak Drazin inverse \( X \) of \( A \),  the following statements are equivalent:
	\begin{itemize}
		\item [{\rm (1)}] \( Y = A^{\dagger}AXAA^{\dagger} \);
		\item [{\rm (2)}] $AY = P_{\mathcal{R}(A^k), \mathcal{N}(XAA^{\dagger})}$,
		$YA = P_{\mathcal{R}(A^{\dagger}A^k), \mathcal{N}(XA)}$, $YAY=Y$;
		\item [{\rm (3)}]  $AY = P_{\mathcal{R}(A^k), \mathcal{N}(XAA^{\dagger})}$,
		$YA = P_{\mathcal{R}(A^{\dagger}A^k), \mathcal{N}(XA)}$,  ${\text{rank}}(Y)={\text{rank}}(A^k)$.
	\end{itemize}
\end{proposition}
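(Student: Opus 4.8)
The plan is to run the cyclic implication $(1)\Rightarrow(2)\Rightarrow(3)\Rightarrow(1)$, with the whole argument resting on the classical fact that a $\{2\}$-inverse of $A$ is determined by its range and null space: if $Y_1,Y_2$ satisfy $Y_iAY_i=Y_i$, $\mathcal{R}(Y_1)=\mathcal{R}(Y_2)$ and $\mathcal{N}(Y_1)=\mathcal{N}(Y_2)$, then $Y_1=Y_2$. Write $Y_0=A^{\dagger}AXAA^{\dagger}$ for the weak CMP inverse. The one preliminary I need is that $Y_0$ is an outer inverse, $Y_0AY_0=Y_0$. I would get this either by cancelling $AA^{\dagger}A=A$ and $A^{\dagger}AA^{\dagger}=A^{\dagger}$ to reduce $Y_0AY_0$ to $A^{\dagger}A(XAX)AA^{\dagger}$ and invoking $XAX=X$, or, more cleanly, by multiplying out the block form of $Y_0$ in Theorem \ref{key} and using the outer-inverse identity $Z(\Sigma K)Z=Z$ of the minimal rank weak Drazin inverse $Z$ together with $KK^{*}+LL^{*}=I_r$. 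Combining $Y_0AY_0=Y_0$ with the two projection identities $AY_0=P_{\mathcal{R}(A^k),\mathcal{N}(XAA^{\dagger})}$ and $Y_0A=P_{\mathcal{R}(A^{\dagger}A^k),\mathcal{N}(XA)}$ of \cite[Lemma 2.1]{WCMP} then pins down $\mathcal{R}(Y_0)=\mathcal{R}(Y_0A)=\mathcal{R}(A^{\dagger}A^k)$ and $\mathcal{N}(Y_0)=\mathcal{N}(AY_0)=\mathcal{N}(XAA^{\dagger})$.

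The implication $(1)\Rightarrow(2)$ is then immediate, since the projection identities are \cite[Lemma 2.1]{WCMP} and $YAY=Y$ is the outer-inverse property above. For $(2)\Rightarrow(3)$ I would argue that $YAY=Y$ forces $\mathcal{R}(Y)=\mathcal{R}(YA)$, and the second projection identity gives $\mathcal{R}(YA)=\mathcal{R}(A^{\dagger}A^k)$, hence $\operatorname{rank}(Y)=\operatorname{rank}(A^{\dagger}A^k)$. Since $AA^{\dagger}A^k=A^k$ (as $\mathcal{R}(A^k)\subseteq\mathcal{R}(A)$), we have $\operatorname{rank}(A^k)=\operatorname{rank}(AA^{\dagger}A^k)\le\operatorname{rank}(A^{\dagger}A^k)\le\operatorname{rank}(A^k)$, whence $\operatorname{rank}(A^{\dagger}A^k)=\operatorname{rank}(A^k)$, which is exactly the rank condition in (3).

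The only step that is not pure bookkeeping is $(3)\Rightarrow(1)$, where I must recover the missing relation $YAY=Y$ from the rank hypothesis. The idea is this: the idempotent $AY$ has $\mathcal{R}(AY)=\mathcal{R}(A^k)$, so $\operatorname{rank}(AY)=\operatorname{rank}(A^k)=\operatorname{rank}(Y)$ by hypothesis; the rank identity $\operatorname{rank}(AY)=\operatorname{rank}(Y)-\dim\!\big(\mathcal{R}(Y)\cap\mathcal{N}(A)\big)$ then forces $\mathcal{R}(Y)\cap\mathcal{N}(A)=\{0\}$. On the other hand, idempotency of $AY$ gives $A(Y-YAY)=AY-(AY)^2=0$, so $\mathcal{R}(Y-YAY)\subseteq\mathcal{R}(Y)\cap\mathcal{N}(A)=\{0\}$ and therefore $YAY=Y$. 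I expect this rank-to-outer-inverse passage to be the main obstacle; everything else is routine.

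With $YAY=Y$ secured, the two projection identities give $\mathcal{R}(Y)=\mathcal{R}(A^{\dagger}A^k)=\mathcal{R}(Y_0)$ and $\mathcal{N}(Y)=\mathcal{N}(XAA^{\dagger})=\mathcal{N}(Y_0)$, so $Y$ and $Y_0$ are $\{2\}$-inverses of $A$ with equal ranges and equal null spaces. By the uniqueness fact stated at the outset, $Y=Y_0=A^{\dagger}AXAA^{\dagger}$, which is (1), closing the cycle.
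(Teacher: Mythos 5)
Your proof is correct, and for the decisive implication $(3)\Rightarrow(1)$ it takes a genuinely different route from the paper's. The two proofs agree on the easy parts: $(1)\Rightarrow(2)$ from \cite[Lemma 2.1]{WCMP} together with the outer-inverse property of the weak CMP inverse, and $(2)\Rightarrow(3)$ by equating $\mathrm{rank}(Y)$ with the rank of one of the projections (the paper uses $AY$ where you use $YA$; both work, and your extra observation $\mathrm{rank}(A^{\dagger}A^k)=\mathrm{rank}(A^k)$ is needed and correct). For $(3)\Rightarrow(1)$, however, the paper verifies the hypotheses of the external characterization \cite[Theorem 2.5]{WCMP} — namely $\mathcal{R}(Y^{*})=\mathcal{R}(AA^{\dagger}X^{*})$ and $YA=A^{\dagger}AXA$ — and then invokes that theorem as a black box, whereas you first recover $YAY=Y$ from the rank hypothesis via the rank--nullity identity $\mathrm{rank}(AY)=\mathrm{rank}(Y)-\dim\bigl(\mathcal{R}(Y)\cap\mathcal{N}(A)\bigr)$ (so in effect you show $(3)\Rightarrow(2)$ directly), and then identify $Y$ with $A^{\dagger}AXAA^{\dagger}$ by the classical uniqueness of a $\{2\}$-inverse with prescribed range and null space. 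Your preliminary step, that $A^{\dagger}AXAA^{\dagger}$ is itself an outer inverse, is legitimately grounded: $XAX=X$ (equivalently $Z(\Sigma K)Z=Z$ in the block form of Theorem~\ref{key}) is a known property of minimal rank weak Drazin inverses that the paper also uses, e.g.\ in proving $Y=0$ iff $A$ is nilpotent. What your approach buys is self-containedness and a sharper picture: it needs only \cite[Lemma 2.1]{WCMP} plus two classical linear-algebra facts, and it makes explicit that condition (3) already forces $YAY=Y$; the paper's proof is shorter on the page but outsources the essential work to \cite[Theorem 2.5]{WCMP}.
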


\begin{proof}
	(1)$\Rightarrow$(2) follows by \cite[Lemma 2.1]{WCMP}.
	
	(2)$\Rightarrow$(3). Since $AY = P_{\mathcal{R}(A^k), \mathcal{N}(XAA^{\dagger})}$ and $YAY=Y$, it follows that  ${\text{rank}}(Y)={\text{rank}}(AY)={\text{dim}}\mathcal{R}(A^k)={\text{rank}}(A^k)$.
	
	(3)$\Rightarrow$(1).  Since $\mathcal{N}(AY)=\mathcal{N}(XAA^{\dagger})$, it follows that $\mathcal{R}((XAA^{\dagger})^*)=\mathcal{R}(Y^{*}A^{*})\subseteq \mathcal{R}(Y^{*})$. And by ${\text{rank}}(Y)={\text{rank}}(A^k)={\text{rank}}(XA)={\text{rank}}(XAA^{\dag})$, we obtain $\mathcal{R}(AA^{\dagger}X^*)= \mathcal{R}(Y^{*})$. Also, 
	$$YA = P_{\mathcal{R}(A^{\dagger}A^k), \mathcal{N}(XA)}=(A^{\dagger}AXAA^{\dagger})A=A^{\dagger}AXA.$$
	Then by \cite[Theorem 2.5]{WCMP}, the proof is completed.
\end{proof}

\section{Relations with other generalized inverses}

In 1971, Rao and Mitra  introduced a class of generalized inverses for square complex matrices in \cite{Rao}: Given \( A \in \mathbb{C}^{n \times n} \), a matrix \( X \in \mathbb{C}^{n \times n} \) is called a \(\chi\)-inverse of \( A \) if it satisfies \( AXA = A \) and \( \mathcal{R}(X) \subseteq \mathcal{R}(A) \). Both the group
inverse and  core inverse are \(\chi\)-inverses. In addition,
it was proved in \cite[Proposition 2.1]{WC2025} that $X\in \mathbb{C}^{n \times n}$ is a $\chi$-inverse of $A$ if and only if ${\text {ind}}(A) \leq 1$ and $X$  is a minimal rank weak
Drazin inverse of $A$.

In the following result, we consider the equivalent conditions when the weak CMP inverse coincides with the Moore-Penrose inverse.
\begin{proposition}\label{withMP}
	Let $A\in \mathbb{C}^{n\times n}$ be given by \eqref{AHS} and $X$  be a minimal rank weak
	Drazin inverse of $A$. If $Y=A^{\dag}AXAA^{\dag}$, then the following statements are equivalent:
	\begin{itemize}
		\item [{\rm (1)}] $Y=A^{\dag}$;
		\item [{\rm (2)}]  ${\text {ind}}(A) \leq 1$;
		\item [{\rm (3)}]  $A=AA^{D}A$;
		\item [{\rm (4)}]  $A=XA^{2}$;
		\item [{\rm (5)}]  $X$ is a $\chi$-inverse of $A$.
	\end{itemize}
\end{proposition}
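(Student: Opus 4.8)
The plan is to take statement~(2), $\operatorname{ind}(A)\le 1$, as the hub and prove that each of (1), (3), (4) and (5) is equivalent to it; the stated equivalence then follows. Two of the spokes are almost immediate. For (2)$\Leftrightarrow$(5), recall that $X$ is assumed to be a minimal rank weak Drazin inverse of $A$, so by the characterization of $\chi$-inverses quoted above (\cite[Proposition 2.1]{WC2025}) such an $X$ is a $\chi$-inverse of $A$ precisely when $\operatorname{ind}(A)\le 1$. For (2)$\Leftrightarrow$(3), I would use that one always has $A^{D}AA^{D}=A^{D}$ and $AA^{D}=A^{D}A$; hence the single extra identity $AA^{D}A=A$ makes $A^{D}$ satisfy all three group-inverse axioms, so $A^{\#}$ exists, which is equivalent to $\operatorname{ind}(A)\le 1$. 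The converse is trivial, since $AA^{\#}A=A$ whenever $A^{\#}$ exists.

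For (2)$\Leftrightarrow$(4) the argument is a rank count. A minimal rank weak Drazin inverse satisfies $XA^{m+1}=A^{m}$ with $m=\operatorname{ind}(A)$, so if $\operatorname{ind}(A)\le 1$ then $XA^{2}=A$, giving (4). Conversely, assume $A=XA^{2}$. Using $\operatorname{rank}(X)=\operatorname{rank}(A^{D})=\operatorname{rank}(A^{m})$ with $m=\operatorname{ind}(A)$, together with the monotone chain $\operatorname{rank}(A)\ge\operatorname{rank}(A^{2})\ge\cdots\ge\operatorname{rank}(A^{m})$, I obtain
\[
\operatorname{rank}(A)=\operatorname{rank}(XA^{2})\le\operatorname{rank}(X)=\operatorname{rank}(A^{m})\le\operatorname{rank}(A),
\]
so every inequality is an equality; in particular $\operatorname{rank}(A)=\operatorname{rank}(A^{2})$, that is, $\operatorname{ind}(A)\le 1$.

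For (1)$\Leftrightarrow$(2) I would feed the Hartwig--Spindelb\"ock forms into the definition of $Y$. By Theorem~\ref{key} and \eqref{AMPHS}, the equality $Y=A^{\dagger}$ amounts to the two block identities $K^{*}KZ=K^{*}\Sigma^{-1}$ and $L^{*}KZ=L^{*}\Sigma^{-1}$. Pre-multiplying these by $K$ and $L$ respectively, adding, and using $KK^{*}+LL^{*}=I_{r}$ collapses them to the single relation $KZ=\Sigma^{-1}$. Since $\Sigma^{-1}$ is invertible and $K$ is square, this forces $K$ to be invertible, which by \cite[Corollary~6]{HS} is exactly $\operatorname{ind}(A)\le 1$. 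Conversely, if $\operatorname{ind}(A)\le 1$ then $K$, hence $\Sigma K$, is invertible, and the defining relation $Z(\Sigma K)^{k'+1}=(\Sigma K)^{k'}$ of a weak Drazin inverse forces $Z=(\Sigma K)^{-1}$; substituting back yields $KZ=\Sigma^{-1}$ and therefore $Y=A^{\dagger}$.

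The step I expect to be the main obstacle is (1)$\Leftrightarrow$(2): it is the only place where the explicit structure of $Y$ is genuinely used, and the delicate point is that, although a minimal rank weak Drazin inverse $Z$ of $\Sigma K$ is far from unique in general, invertibility of $\Sigma K$ pins it down to $(\Sigma K)^{-1}$, which is precisely what makes the converse go through. By comparison the rank chain in (4)$\Rightarrow$(2) is short, the only care needed being the degenerate case $\operatorname{ind}(A)=0$ (i.e.\ $A$ invertible), in which all five statements hold simultaneously.
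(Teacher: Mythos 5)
Your proof is correct and takes essentially the same approach as the paper: statement (2) serves as the hub, (2)$\Leftrightarrow$(5) is the same citation of \cite[Proposition 2.1]{WC2025}, and (1)$\Leftrightarrow$(2) uses the identical Hartwig-Spindelb$\ddot{\rm o}$ck block computation (pre-multiplying by $K$ and $L$ and using $KK^{*}+LL^{*}=I_{r}$ to collapse $Y=A^{\dag}$ to $KZ=\varSigma^{-1}$). The only cosmetic deviations are that the paper finishes (1)$\Rightarrow$(2) by computing ${\rm rank}(A^{2})={\rm rank}(A)$ directly rather than invoking \cite[Corollary 6]{HS}, proves (2)$\Rightarrow$(1) decomposition-free via $Y=A^{\dag}A(XA^{2})A^{D}A^{\dag}=A^{\dag}$, and dismisses (2)$\Leftrightarrow$(3)$\Leftrightarrow$(4) as clear, where you supply the short group-inverse and rank-chain arguments.
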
	

\begin{proof}
	(1)$\Rightarrow$(2): Suppose  $Y=A^{\dag}$. Then by  \eqref{AMPHS} and  \eqref{ACMP}, we have $K^{*}\varSigma^{-1}=K^{*}KZ$ and $L^{*}\varSigma^{-1}=L^{*}KZ$. Pre-multiplying  by $K$ and $L$ on these two equations respectively, and using $KK^{*}+LL^{*}=I_{r}$, we obtain $\varSigma^{-1}=KZ$, and hence $\varSigma KZ=I_r$. Thus, $\varSigma K$ is nonsingular and ${\rm rank}(A)=r$. Furthermore, since
	\begin{align*}
		A^2 = U\begin{pmatrix}
			\Sigma K & 0 \\
			0 & 0
		\end{pmatrix}\begin{pmatrix}
			\Sigma K & \Sigma L \\
			0 & 0
		\end{pmatrix}U^* = U\begin{pmatrix}
			\Sigma K & 0 \\
			0 & I_{n - r}
		\end{pmatrix}\begin{pmatrix}
			\Sigma K & \Sigma L \\
			0 & 0
		\end{pmatrix}U^*,
	\end{align*} 
	it follows that ${\rm rank}(A)={\rm rank}(A^2)=r$, which implies that ${\text {ind}}(A)\leq 1$.
	
	(2)$\Rightarrow$(1): Since ${\text {ind}}(A) \leq 1$, it follows that $A=XA^{2}$ and $A=A^{2}A^{D}$. Thus,
	\begin{align*}
		Y&=A^{\dag}AXAA^{\dag}
		=A^{\dag}AX(A^{2}A^{D})A^{\dag}\\
		&=A^{\dag}A(XA^{2})A^{D}A^{\dag}
		=A^{\dag}AAA^{D}A^{\dag}=A^{\dag}AA^{\dag}=A^{\dag}.
	\end{align*}
	
	(2)$\Leftrightarrow$(3)$\Leftrightarrow$(4) is clear.
	
	(2)$\Leftrightarrow$(5) follows by \cite[Proposition 2.1]{WC2025}.
\end{proof}

\begin{remark}
	Note that, if $A$ is an EP matrix (i.e. $A^{\dag}=A^{\#}$), then ${\text {ind}}(A) \leq 1$, and by Proposition \ref{withMP}, the weak CMP inverse $A^{w,c,\dag}=A^{\dag}=A^{\#}=A^{\scriptsize\textcircled{\#}}$, which implies that $A^{w,c,\dag}$
	is also EP.  However, \cite[Theorem 2.5]{CMP} is sufficient to illustrate that $A^{w,c,\dag}$
	being  EP may not imply $A$ being  EP.
\end{remark}

Analogously as Proposition \ref{withMP}, we provide the necessary and sufficient conditions for the weak MPD inverse coincides with the Moore-Penrose inverse.
\begin{proposition}\label{wMPD-MP}
	Let $A\in \mathbb{C}^{n\times n}$ be given by \eqref{AHS} and $X$  be a minimal rank weak
	Drazin inverse of $A$. If $Y=A^{\dag}XA$, then the following statements are equivalent:
	\begin{itemize}
		\item [{\rm (1)}] $Y=A^{\dag}$;
		\item [{\rm (2)}]  ${\text {ind}}(A)=1$ and $L=0$.	
	\end{itemize}
	
\end{proposition}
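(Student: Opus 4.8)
The plan is to carry out everything inside the Hartwig-Spindelb\"{o}ck decomposition, exactly as in the proofs of the preceding propositions. First I would write $Y = A^{\dag}XA$ in block form. Substituting \eqref{AMPHS} for $A^{\dag}$, the block shape \eqref{AXHS} for $X$, and \eqref{AHS} for $A$, and observing that the $W$-block of $X$ is annihilated by the zero rows of $A$, I get
\[
XA = U\begin{bmatrix} Z\Sigma K & Z\Sigma L \\ 0 & 0 \end{bmatrix}U^{*},
\]
and hence
\[
Y = A^{\dag}XA = U\begin{bmatrix} K^{*}\Sigma^{-1}Z\Sigma K & K^{*}\Sigma^{-1}Z\Sigma L \\ L^{*}\Sigma^{-1}Z\Sigma K & L^{*}\Sigma^{-1}Z\Sigma L \end{bmatrix}U^{*}.
\]
Note that $Y$ is independent of $W$, which is convenient.

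For (2)$\Rightarrow$(1) I would assume $\operatorname{ind}(A)=1$ and $L=0$. By \cite[Corollary 6]{HS} the index condition makes $K$ invertible, hence $\Sigma K$ nonsingular; since $Z$ is a minimal rank weak Drazin inverse of the nonsingular matrix $\Sigma K$, it must coincide with $(\Sigma K)^{-1}$, so $Z\Sigma K = I_{r}$. Feeding $L=0$ and $Z\Sigma K = I_{r}$ into the block form collapses $Y$ to $U\begin{bmatrix} K^{*}\Sigma^{-1} & 0 \\ 0 & 0\end{bmatrix}U^{*}$, which by \eqref{AMPHS} with $L=0$ is exactly $A^{\dag}$.

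For the converse (1)$\Rightarrow$(2) I would set $Y = A^{\dag}$ and compare the block matrix for $Y$ with \eqref{AMPHS} entrywise. Writing $P = \Sigma^{-1}Z\Sigma$ for brevity, this gives $K^{*}PK = K^{*}\Sigma^{-1}$, $L^{*}PK = L^{*}\Sigma^{-1}$, $K^{*}PL = 0$ and $L^{*}PL = 0$. The decisive step---the same pre-multiply-and-add device used repeatedly above---is to pre-multiply the two ``$K$-column'' equations by $K$ and the two ``$L$-column'' equations by $L$ and add, invoking $KK^{*}+LL^{*}=I_{r}$; this produces the clean identities $Z\Sigma K = I_{r}$ and $Z\Sigma L = 0$. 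The first shows that $\Sigma K$ has a left inverse and is therefore nonsingular, so $K$ is invertible, $\operatorname{ind}(A)=1$ by \cite[Corollary 6]{HS}, and $Z=(\Sigma K)^{-1}$; substituting the latter into $Z\Sigma L = 0$ gives $\Sigma L = 0$, whence $L=0$ since $\Sigma$ is nonsingular.

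The block computations are routine once the explicit form of $Y$ is written down, and the only step demanding any care is the combination via $KK^{*}+LL^{*}=I_{r}$ in the converse direction. Since this is precisely the standard manoeuvre employed in all the earlier proofs, I expect no genuine obstacle. The one fact that must be used correctly in both directions is that a minimal rank weak Drazin inverse of a nonsingular matrix equals its ordinary inverse, which is what pins down $Z = (\Sigma K)^{-1}$.
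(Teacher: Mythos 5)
Your proof is correct and follows essentially the same route as the paper: write $Y=A^{\dag}XA$ in Hartwig--Spindelb\"{o}ck block form, compare blocks with $A^{\dag}$, and use the pre-multiply-and-add device with $KK^{*}+LL^{*}=I_{r}$ to extract $Z\varSigma K=I_{r}$ and $Z\varSigma L=0$, whence $K$ is invertible (so ${\rm ind}(A)=1$ by \cite[Corollary 6]{HS}) and $L=0$. The only differences are cosmetic: you spell out (2)$\Rightarrow$(1), which the paper dismisses as clear, and your phrase ``pre-multiply the two $K$-column equations by $K$ and the two $L$-column equations by $L$'' should say that within each column the $K^{*}$-row equation is pre-multiplied by $K$ and the $L^{*}$-row equation by $L$ before adding---evidently what you intended, since that is the only way $KK^{*}+LL^{*}=I_{r}$ enters.
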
	

\begin{proof}
	(1)$\Rightarrow$(2):
	Let $A$ be given by \eqref{AHS}. Applying \eqref{AMPHS} and \eqref{AXHS}, the weak MPD inverse
	\begin{equation}\label{wMPD}
		Y=A^{\dag}XA=U\left[\begin{matrix}
			K^{*}\varSigma^{-1}Z\varSigma K	&	K^{*}\varSigma^{-1}Z\varSigma L	\\
			L^{*}\varSigma^{-1}Z\varSigma K	& L^{*}\varSigma^{-1}Z\varSigma L		\\
		\end{matrix}\right] U^{*}.
	\end{equation} 
	A direct computation shows that $Y=A^{\dag}$ if and only if the following conditions hold.
	\begin{equation}\label{01}
		K^{*}\varSigma^{-1}Z\varSigma K=K^{*}\varSigma^{-1},
	\end{equation}
	\begin{equation}\label{02}
		L^{*}\varSigma^{-1}Z\varSigma K=L^{*}\varSigma^{-1},
	\end{equation}
	\begin{equation}\label{03}
		K^{*}\varSigma^{-1}Z\varSigma L=0,
	\end{equation}
	\begin{equation}\label{04}
		L^{*}\varSigma^{-1}Z\varSigma L=0.
	\end{equation}	
	Pre-multiplying \eqref{01} and \eqref{02} by $K$ and $L$  respectively, and using $KK^{*}+LL^{*}=I_{r}$, we obtain $\varSigma^{-1}Z\varSigma K=\varSigma^{-1}$. 
	Thus, $Z\varSigma K=I_{r}$ and $Z=(\varSigma K)^{-1}$, which implies that $K$ is invertible.
	By \cite[Corollary 6]{HS}, $K$ is invertible if and only if ${\text {ind}}(A)=1$. Similarly, pre-multiplying \eqref{03} and \eqref{04} by $K$ and $L$  respectively, we get $\varSigma^{-1}Z\varSigma L=0$, which implies that $L=0$.
	
	(2)$\Rightarrow$(1) is clear.
\end{proof}

Now, we  present the equivalent conditions under which the weak CMP inverse coincides with the weak MPD inverse.	
\begin{proposition}\label{wMPD-wCMP}
	Let $A\in \mathbb{C}^{n\times n}$ be given by \eqref{AHS} and $X$  be a minimal rank weak
	Drazin inverse of $A$. Then $X$ is expressed as \eqref{AXHS},
	and the following statements are equivalent:
	\begin{itemize}
		\item [{\rm (1)}] $A^{\dag}AXAA^{\dag}=A^{\dag}XA$;
		\item [{\rm (2)}]  $Z=(\varSigma K)^{D}$ and $(\varSigma K)^{D}\varSigma L=0$.	
	\end{itemize}
	
\end{proposition}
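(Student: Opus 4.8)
The plan is to compute both sides of the identity in statement~(1) explicitly in terms of the Hartwig--Spindelb\"ock blocks, and then compare block by block. The weak CMP inverse $A^{\dag}AXAA^{\dag}$ already has the clean form \eqref{ACMP} from Theorem~\ref{key}, namely
\begin{equation*}
	A^{\dag}AXAA^{\dag}=U\left[\begin{matrix}
		K^{*}KZ	&	0	\\
		L^{*}KZ		& 0		\\
	\end{matrix}\right]U^{*},
\end{equation*}
while the weak MPD inverse $A^{\dag}XA$ was already computed in the proof of Proposition~\ref{wMPD-MP}, see \eqref{wMPD}. So the first step is simply to set these two matrices equal and read off the four resulting block equations. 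This reduces the problem to a purely algebraic comparison with no analytic content.

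The second step is to simplify the block equations using the relation $KK^{*}+LL^{*}=I_{r}$, exactly as in the proofs of Propositions~\ref{withMP} and~\ref{wMPD-MP}. Equating the $(1,1)$ and $(2,1)$ blocks gives $K^{*}KZ=K^{*}\Sigma^{-1}Z\Sigma K$ and $L^{*}KZ=L^{*}\Sigma^{-1}Z\Sigma K$; pre-multiplying by $K$ and $L$ respectively and adding, the factor $KK^{*}+LL^{*}$ collapses to $I_r$ and yields $KZ=\Sigma^{-1}Z\Sigma K$, equivalently $\Sigma KZ=Z\Sigma K$. Likewise, equating the right-hand blocks (both of which must vanish for the weak CMP inverse) forces $K^{*}\Sigma^{-1}Z\Sigma L=0$ and $L^{*}\Sigma^{-1}Z\Sigma L=0$, and the same pre-multiply-and-add trick gives $\Sigma^{-1}Z\Sigma L=0$, i.e. $Z\Sigma L=0$. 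So the identity is equivalent to the conjunction $\Sigma KZ=Z\Sigma K$ together with $Z\Sigma L=0$.

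The third and decisive step is to show that $\Sigma KZ=Z\Sigma K$, combined with $Z$ being a minimal rank weak Drazin inverse of $\Sigma K$, is equivalent to $Z=(\Sigma K)^{D}$. One direction is immediate: the Drazin inverse commutes with $\Sigma K$ and is a minimal rank weak Drazin inverse, so it satisfies both the commutativity relation and $Z(\Sigma K)Z=Z$. For the converse I would use that a minimal rank weak Drazin inverse already satisfies $Z(\Sigma K)^{k+1}=(\Sigma K)^{k}$, $Z(\Sigma K)Z=Z$ and $\mathrm{rank}(Z)=\mathrm{rank}((\Sigma K)^{D})$; adding the commutativity $Z\Sigma K=\Sigma KZ$ upgrades these to the defining Drazin equations. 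Once $Z=(\Sigma K)^{D}$ is established, the second condition $Z\Sigma L=0$ becomes literally $(\Sigma K)^{D}\Sigma L=0$, which is statement~(2), and the equivalence is proved.

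The main obstacle I anticipate is the third step, pinning down that commutativity together with the weak Drazin and rank conditions forces the full Drazin inverse. The subtlety is that $\Sigma K$ need not have index $1$, so one cannot simply invert; the argument must genuinely exploit the minimal-rank hypothesis $\mathrm{rank}(Z)=\mathrm{rank}((\Sigma K)^{D})$ to rule out spurious commuting solutions, presumably by checking that $Z$ and $(\Sigma K)^{D}$ agree on the core--nilpotent decomposition of $\Sigma K$. Everything else is routine block arithmetic of the kind already used repeatedly in the preceding propositions.
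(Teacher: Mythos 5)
Your proposal is correct and follows essentially the same route as the paper: compare the block forms \eqref{ACMP} and \eqref{wMPD}, collapse the four block equations via pre-multiplication by $K$ and $L$ using $KK^{*}+LL^{*}=I_{r}$ to get $Z\varSigma K=\varSigma KZ$ and $Z\varSigma L=0$, and then identify $Z=(\varSigma K)^{D}$. The only difference is that you spell out the step the paper leaves implicit — that commutativity plus the weak Drazin relation and the outer-inverse property $Z(\varSigma K)Z=Z$ (which minimal rank forces) yield the three defining Drazin equations, hence $Z=(\varSigma K)^{D}$ by uniqueness — and your outline of that step is sound.
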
		

\begin{proof}
	(1)$\Rightarrow$(2): Applying \eqref{ACMP} and \eqref{wMPD}, $A^{\dag}AXAA^{\dag}=A^{\dag}XA$ if and only if the following conditions hold.
	\begin{equation}\label{1-1}
		K^{*}\varSigma^{-1}Z\varSigma K=K^{*}KZ,
	\end{equation}
	\begin{equation}\label{1-2}
		L^{*}\varSigma^{-1}Z\varSigma K=L^{*}KZ,
	\end{equation}
	\begin{equation}\label{1-3}
		K^{*}\varSigma^{-1}Z\varSigma L=0,
	\end{equation}
	\begin{equation}\label{1-4}
		L^{*}\varSigma^{-1}Z\varSigma L=0.
	\end{equation}
	Pre-multiplying \eqref{1-1} and \eqref{1-2} by $K$ and $L$  respectively, and using $KK^{*}+LL^{*}=I_{r}$, we obtain $\varSigma^{-1}Z\varSigma K=KZ$. Thus, $Z\varSigma K=\varSigma KZ$, which implies that $Z=(\varSigma K)^{D}$.
	Similarly, pre-multiplying \eqref{1-3} and \eqref{1-4} by $K$ and $L$  respectively, we get $\varSigma^{-1}Z\varSigma L=0$, which implies that $(\varSigma K)^{D}\varSigma L=Z\varSigma L=0$.
	
	(2)$\Rightarrow$(1): Suppose $Z=(\varSigma K)^{D}$ and $(\varSigma K)^{D}\varSigma L=0$. Then a direct calculation shows that \eqref{1-1}-\eqref{1-4} hold.
\end{proof}

Recall that $A\in \mathbb{C}^{n\times n}$ is called a Core-EP matrix \cite{CMP} if it satisfies $A^{\dag}AA^{D}A=AA^{D}AA^{\dag}$. Motivated by the Core-EP matrix, if $X$  is a minimal rank weak Drazin inverse of $A$, we investigate
equivalent conditions for the equality $A^{\dag}AXA=AXAA^{\dag}$.

\begin{lemma}\label{wC-EP}
	Let $A\in \mathbb{C}^{n\times n}$ be given by \eqref{AHS} and $X$  be a minimal rank weak
	Drazin inverse of $A$. Then $X$ is expressed as \eqref{AXHS},
	and the following statements are equivalent:
	\begin{itemize}
		\item [{\rm (1)}] $A^{\dag}AXA=AXAA^{\dag}$;
		\item [{\rm (2)}]  $K^{*}KZ=Z$, $L^{*}KZ=0$, $Z\varSigma L=0$
		and $Z=(\varSigma K)^{D}$.	
	\end{itemize}
\end{lemma}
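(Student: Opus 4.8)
The plan is to push everything through the Hartwig--Spindelb\"ock decomposition \eqref{AHS}, exactly as in the proof of Proposition~\ref{wMPD-wCMP}, and reduce the single operator identity in (1) to a system of four block equations. Substituting \eqref{AHS}, \eqref{AMPHS} and the block form \eqref{AXHS} of $X$, I would compute
\[
A^{\dag}AXA = U\begin{bmatrix} K^{*}KZ\varSigma K & K^{*}KZ\varSigma L \\ L^{*}KZ\varSigma K & L^{*}KZ\varSigma L \end{bmatrix}U^{*},
\]
and
\[
AXAA^{\dag} = U\begin{bmatrix} \varSigma K Z & 0 \\ 0 & 0 \end{bmatrix}U^{*},
\]
where the top-left block of $AXAA^{\dag}$ collapses to $\varSigma K Z$ because $KK^{*}+LL^{*}=I_{r}$. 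Hence (1) is equivalent to the four block equations $K^{*}KZ\varSigma K=\varSigma K Z$, $K^{*}KZ\varSigma L=0$, $L^{*}KZ\varSigma K=0$ and $L^{*}KZ\varSigma L=0$. The implication (2)$\Rightarrow$(1) is then a direct verification: $K^{*}KZ=Z$ turns the two left blocks into $Z\varSigma K$ and $Z\varSigma L$, the equality $Z=(\varSigma K)^{D}$ forces $Z\varSigma K=\varSigma K Z$, while $Z\varSigma L=0$ and $L^{*}KZ=0$ dispatch the remaining blocks.

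For (1)$\Rightarrow$(2) I would first eliminate the outer factors $K,L,K^{*},L^{*}$ using $KK^{*}+LL^{*}=I_{r}$, in the same style as the earlier proofs. Post-multiplying the two bottom equations by $K^{*}$ and $L^{*}$ and adding yields $L^{*}KZ\varSigma=0$, hence $L^{*}KZ=0$ since $\varSigma$ is invertible; this is the relation $L^{*}KZ=0$. Pre-multiplying the first and third equations by $K$ and $L$ and adding gives $KZ\varSigma K=K\varSigma KZ$, and similarly the second and fourth give $KZ\varSigma L=0$.

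The crux is to cancel the leading factor $K$ in these last two relations, and this is precisely the step where the \emph{minimal-rank} hypothesis on $Z$ is indispensable. From $\operatorname{rank}(Z)=\operatorname{rank}((\varSigma K)^{D})$ together with the weak Drazin identity one gets $\mathcal{R}(Z)=\mathcal{R}((\varSigma K)^{k})$ with $k=\operatorname{ind}(\varSigma K)$, and on this ``core'' subspace $\varSigma K$ (hence $K$, as $\varSigma$ is invertible) acts injectively, since $\varSigma K$ maps $\mathcal{R}((\varSigma K)^{k})$ bijectively onto $\mathcal{R}((\varSigma K)^{k+1})=\mathcal{R}((\varSigma K)^{k})$. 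As the columns of $Z\varSigma K$, $\varSigma K Z$ and $Z\varSigma L$ all lie in $\mathcal{R}(Z)=\mathcal{R}((\varSigma K)^{k})$, injectivity of $K$ there upgrades $KZ\varSigma K=K\varSigma KZ$ to the commutation $Z\varSigma K=\varSigma K Z$, and $KZ\varSigma L=0$ to $Z\varSigma L=0$. A minimal rank weak Drazin inverse of $\varSigma K$ that commutes with $\varSigma K$ satisfies all three defining equations of the Drazin inverse ($Z\varSigma KZ=Z$ because $Z\varSigma K$ fixes $\mathcal{R}(Z)$ pointwise, and $(\varSigma K)^{k+1}Z=(\varSigma K)^{k}$ from commutation and the weak Drazin identity), so $Z=(\varSigma K)^{D}$. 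Finally, post-multiplying $K^{*}KZ\varSigma K=\varSigma KZ=Z\varSigma K$ by $Z$ and using $Z\varSigma KZ=Z$ and $\varSigma KZ^{2}=Z$ yields $K^{*}KZ=Z$, completing (2).

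I expect the genuine obstacle to be exactly this cancellation of $K$: the block algebra only ever determines expressions of the form $K(\cdots)$, and for a general (non-minimal-rank) weak Drazin inverse the relevant blocks would remain undetermined on $\ker K$, so the commutation and the identity $Z\varSigma L=0$ could fail. It is the range identity $\mathcal{R}(Z)=\mathcal{R}((\varSigma K)^{k})$, forced by the rank condition, together with the invertibility of $\varSigma K$ on its core subspace, that confines all relevant columns to a subspace on which $K$ is injective and thereby closes the argument.
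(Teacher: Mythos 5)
Your proof is correct, and it rests on the same skeleton as the paper's: the reduction via \eqref{AHS}, \eqref{AMPHS}, \eqref{AXHS} of statement (1) to the four block equations \eqref{2-1}--\eqref{2-4}, and an identical direct verification of (2)$\Rightarrow$(1). Where you genuinely diverge is in how (2) is extracted from those four equations. The paper does it with a short algebraic trick: post-multiply \eqref{2-1} and \eqref{2-3} by $Z$ and use the minimal-rank identities $Z\varSigma KZ=Z$ and $\varSigma KZ^{2}=Z$ to obtain $K^{*}KZ=Z$ and $L^{*}KZ=0$ at once; substituting $K^{*}KZ=Z$ back into \eqref{2-1} and \eqref{2-2} then gives $Z\varSigma K=\varSigma KZ$ (hence $Z=(\varSigma K)^{D}$) and $Z\varSigma L=0$. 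You instead first eliminate the outer $K^{*},L^{*}$ factors with $KK^{*}+LL^{*}=I_{r}$ (the device the paper uses in Propositions \ref{wMPD-MP} and \ref{wMPD-wCMP}), and then must cancel a remaining left factor $K$; your argument for this cancellation --- minimality forces $\mathcal{R}(Z)=\mathcal{R}((\varSigma K)^{k})$, the matrix $K$ is injective on that subspace, and all relevant columns lie in it --- is valid, as is your derivation of $Z=(\varSigma K)^{D}$ from commutation together with the weak Drazin equation. Both proofs use minimality of $Z$ in an essential way, just packaged differently: the paper quotes the outer-inverse identities (which follow from the characterization in \cite{WC2023}, e.g.\ $\varSigma KZ^{2}=Z$ because $\varSigma KZ$ is idempotent with range $\mathcal{R}(Z)$) without comment, while you re-derive what you need from the range characterization, making your argument longer but more self-contained. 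Amusingly, your final step (post-multiplying \eqref{2-1} by $Z$ to recover $K^{*}KZ=Z$) is precisely the paper's first step, so the two proofs traverse essentially the same identities in opposite orders.
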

\begin{proof}
	(1)$\Rightarrow$(2): Let $A$ be given by \eqref{AHS}. Then applying \eqref{AMPHS} and \eqref{AXHS}, we obtain
	\begin{align*}
		A^{\dag}AXA=U\left[\begin{matrix}
			K^{*}KZ\varSigma K	&	K^{*}KZ\varSigma L	\\
			L^{*}KZ\varSigma K		& L^{*}KZ\varSigma L		\\
		\end{matrix}\right] U^{*},
	\end{align*} 
	\begin{align*}
		AXAA^{\dag}=U\left[\begin{matrix}
			\varSigma KZ & 0	\\
			0		& 0		\\
		\end{matrix}\right] U^{*}.
	\end{align*} 
	Thus, $A^{\dag}AXA=AXAA^{\dag}$ if and only if the following conditions hold.
	\begin{equation}\label{2-1}
		K^{*}KZ\varSigma K=\varSigma KZ,
	\end{equation}
	\begin{equation}\label{2-2}
		K^{*}KZ\varSigma L=0,
	\end{equation}
	\begin{equation}\label{2-3}
		L^{*}KZ\varSigma K=0,
	\end{equation}
	\begin{equation}\label{2-4}
		L^{*}KZ\varSigma L=0.
	\end{equation}
	Post-multiplying \eqref{2-1} and \eqref{2-3} by $Z$ and using $Z(\varSigma K) Z=Z$, we obtain $K^{*}KZ=\varSigma KZ^{2}=Z$ and $L^{*}KZ=0$. Substituting $K^{*}KZ=Z$ into \eqref{2-1}, we get 
	$Z\varSigma K=\varSigma KZ$, which implies that $Z=(\varSigma K)^{D}$. Substituting $K^{*}KZ=Z$ into \eqref{2-2} yields $Z\varSigma L=K^{*}KZ\varSigma L=0$.
	
	(2)$\Rightarrow$(1):  Suppose $K^{*}KZ=Z$, $L^{*}KZ=0$, $Z\varSigma L=0$
	and $Z=(\varSigma K)^{D}$.	Then \eqref{2-2}-\eqref{2-4} can be obtained directly, and  $K^{*}KZ\varSigma K=Z\varSigma K=(\varSigma K)^{D}\varSigma K=\varSigma K(\varSigma K)^{D}=\varSigma KZ$.
\end{proof}	

\begin{remark}
	Let $A\in \mathbb{C}^{n\times n}$ be given by \eqref{AHS}. If $X$  is a minimal rank weak
	Drazin inverse of $A$, then by Proposition \ref{wMPD-wCMP} and Lemma \ref{wC-EP},  we know that $A^{\dag}AXA=AXAA^{\dag}$ can imply $A^{\dag}AXAA^{\dag}=A^{\dag}XA$. 
	However, the following example shows that $A^{\dag}AXAA^{\dag}=A^{\dag}XA$ may not imply $A^{\dag}AXA=AXAA^{\dag}$.
	
\end{remark}

\begin{example}
	Let	$A=\left(\begin{matrix}
		1  &   1 &    0  &   1\\
		0  &   0   &  0  &  -1\\
		0   &  0   &  0  &   0\\
		0  &   0   &  0  &   0
	\end{matrix}
	\right)$. Then  
	$A^{\dag}=\left(\begin{matrix}
		1/2  &   1/2 &    0  &   0\\
		1/2  &   1/2   &  0  &  0\\
		0   &  0   &  0  &   0\\
		0  &   -1   &  0  &   0
	\end{matrix}
	\right),$
	and 
	$X=\left(\begin{matrix}
		1  &   1 &    0  &   1\\
		0  &   0   &  0  &  0\\
		0   &  0   &  0  &   0\\
		0  &   0   &  0  &   0
	\end{matrix}
	\right)$ is a minimal rank  weak Drazin inverse of $A$.
	A direct calculation shows that 
	$$A^{\dag}AXAA^{\dag}
	=\left(\begin{matrix}
		1/2  &   1/2 &    0  &   0\\
		1/2  &   1/2   &  0  &  0\\
		0   &  0   &  0  &   0\\
		0  &   0   &  0  &   0
	\end{matrix}
	\right)
	=A^{\dag}XA.$$
	However, $A^{\dag}AXA\neq AXAA^{\dag}$.

\end{example}

\begin{remark}
	Let $A\in \mathbb{C}^{n\times n}$ be given by \eqref{AHS} and  $X$  be a minimal rank weak
	Drazin inverse of $A$. By \cite[Lemma 3.2]{CMP}, $A$ is a Core-EP matrix if and only if 
	$K^{*}K(\varSigma K)^{D}=(\varSigma K)^{D}$, $L^{*}K(\varSigma K)^{D}=0$ and 
	$(\varSigma K)^{D}\varSigma L=0$.
	Then by Lemma \ref{wC-EP},	we obtain $A^{\dag}AXA=AXAA^{\dag}$ implies that $A$ is Core-EP matrix. So it
	is natural to consider whether they are equivalent.
	But the following example shows that the converse is not true in general.
\end{remark}	

\begin{example}
	Let	$A=\left(\begin{matrix}
		1  &   0 &    0  &   0\\
		0  &   0   &  0  &  1\\
		0   &  0   &  0  &   0\\
		0  &   0   &  0  &   0
	\end{matrix}
	\right)$ with $\text{ind}(A)=2$. By computation, we obtain that  $A^{\dag}=\left(\begin{matrix}
		1  &   0 &    0  &   0\\
		0  &   0   &  0  &  0\\
		0   &  0   &  0  &   0\\
		0  &   1   &  0  &   0
	\end{matrix}
	\right)$ and $A^{D}=\left(\begin{matrix}
		1  &   0 &    0  &   0\\
		0  &   0   &  0  &  0\\
		0   &  0   &  0  &   0\\
		0  &   0   &  0  &   0
	\end{matrix}
	\right)$. Also, $X=\left(\begin{matrix}
		1  &   -1 &    0  &   1\\
		0  &   0   &  0  &  0\\
		0   &  0   &  0  &   0\\
		0  &   0   &  0  &   0
	\end{matrix}
	\right)$ is a minimal rank weak Drazin inverse of $A$.
	A direct calculation shows that
	$$A^{\dag}AA^{D}A=\left(\begin{matrix}
		1  &   0 &    0  &   0\\
		0  &   0   &  0  &  0\\
		0   &  0   &  0  &   0\\
		0  &   0   &  0  &   0
	\end{matrix}
	\right)=AA^{D}AA^{\dag},$$
	which implies that $A$ is Core-EP matrix. However, $A^{\dag}AXA \neq AXAA^{\dag}$.

\end{example}

Applying Lemma \ref{wC-EP}, we obtain the following.
\begin{theorem}\label{main}
	Let $A\in \mathbb{C}^{n\times n}$  and $X$  be a minimal rank weak
	Drazin inverse of $A$. Then the following statements are equivalent:
	\begin{itemize}
		\item [{\rm (1)}] $A^{\dag}AXA=AXAA^{\dag}$;
		\item [{\rm (2)}]  $A^{\dag}AXAA^{\dag}=A^{D, \dag}$ and $A^{\dag}AXAA^{\dag}=A^{\dag,D}$;
		\item [{\rm (3)}]  	$A^{\dag}AXAA^{\dag}=A^{D}=A^{D, \dag}=A^{\dag,D}$;
		\item [{\rm (4)}] $XAA^{\dag}=A^{D}$ and $A^{\dag}A^{k+1}=A^{k}$.
	\end{itemize}
\end{theorem}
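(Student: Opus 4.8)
The plan is to work entirely inside the Hartwig--Spindelb\"ock picture \eqref{AHS} and to use Lemma \ref{wC-EP} as the bridge. Writing $B=\Sigma K$ throughout, I would first record the block forms of all matrices in the statement. From \eqref{AHS}, \eqref{AMPHS} and the identity $KK^{*}+LL^{*}=I_{r}$ one computes
\[
A^{D}=U\begin{bmatrix} B^{D} & (B^{D})^{2}\Sigma L\\ 0 & 0\end{bmatrix}U^{*},\qquad
A^{D,\dag}=U\begin{bmatrix} B^{D} & 0\\ 0 & 0\end{bmatrix}U^{*},\qquad
XAA^{\dag}=U\begin{bmatrix} Z & 0\\ 0 & 0\end{bmatrix}U^{*},
\]
\[
A^{\dag,D}=U\begin{bmatrix} K^{*}KB^{D} & K^{*}K(B^{D})^{2}\Sigma L\\ L^{*}KB^{D} & L^{*}K(B^{D})^{2}\Sigma L\end{bmatrix}U^{*},
\]
together with the form \eqref{ACMP} of $A^{\dag}AXAA^{\dag}$. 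These, plus the four equalities of Lemma \ref{wC-EP}(2), are the only ingredients.

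For the cycle $(1)\Rightarrow(3)\Rightarrow(2)\Rightarrow(1)$, the first step is routine: feeding $K^{*}KZ=Z$, $L^{*}KZ=0$, $Z\Sigma L=0$ and $Z=B^{D}$ into the forms above collapses $A^{\dag}AXAA^{\dag}$, $A^{D}$, $A^{D,\dag}$ and $A^{\dag,D}$ all to $U\operatorname{diag}(B^{D},0)U^{*}$ (for $A^{D}$ one uses $(B^{D})^{2}\Sigma L=B^{D}(Z\Sigma L)=0$), which is (3); and $(3)\Rightarrow(2)$ is immediate since (3) literally contains both equalities of (2).

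The crux is $(2)\Rightarrow(1)$. Block comparison of $A^{\dag}AXAA^{\dag}=A^{D,\dag}$ gives $K^{*}KZ=B^{D}$ and $L^{*}KZ=0$, while $A^{\dag}AXAA^{\dag}=A^{\dag,D}$ gives $K^{*}KZ=K^{*}KB^{D}$ together with $K^{*}K(B^{D})^{2}\Sigma L=0$ and $L^{*}K(B^{D})^{2}\Sigma L=0$. The first pair yields $L^{*}KZ=0$ and the identity $K^{*}KB^{D}=B^{D}$, so $K^{*}K$ acts as the identity on $\mathcal{R}(B^{D})$. The obstacle is to upgrade $K^{*}KZ=B^{D}$ to $Z=B^{D}$, and here I would invoke the range property of a minimal rank weak Drazin inverse: from $ZB^{m+1}=B^{m}$ and $\operatorname{rank}(Z)=\operatorname{rank}(B^{D})$ (see \cite{WC2023}) one gets $\mathcal{R}(Z)=\mathcal{R}(B^{m})=\mathcal{R}(B^{D})$, so the columns of $Z$ lie in $\mathcal{R}(B^{D})$ and hence $K^{*}KZ=Z$; combined with $K^{*}KZ=B^{D}$ this forces $Z=B^{D}$. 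Finally, premultiplying the two right-block relations by $K$ and $L$ and adding (using $KK^{*}+LL^{*}=I_{r}$) gives $K(B^{D})^{2}\Sigma L=0$, whence $Z\Sigma L=B^{D}\Sigma L=\Sigma K(B^{D})^{2}\Sigma L=0$. These are exactly the four conditions of Lemma \ref{wC-EP}(2), so (1) follows.

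For $(1)\Leftrightarrow(4)$ I would argue directly. Since $XAA^{\dag}=U\operatorname{diag}(Z,0)U^{*}$, the equality $XAA^{\dag}=A^{D}$ is equivalent to $Z=B^{D}$ and $(B^{D})^{2}\Sigma L=0$; and writing $A^{\dag}A^{k+1}=(A^{\dag}A)A^{k}$ and simplifying with \eqref{AHS}, \eqref{AMPHS} (cancelling $\Sigma^{-1}\Sigma$ and recalling $\operatorname{ind}(B)=k-1$) shows $A^{\dag}A^{k+1}=A^{k}$ amounts to $K^{*}KB^{k}=B^{k}$, $L^{*}KB^{k}=0$, $K^{*}KB^{k-1}\Sigma L=B^{k-1}\Sigma L$ and $L^{*}KB^{k-1}\Sigma L=0$. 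Granting (4), $Z=B^{D}$ is immediate, right-multiplying the first two relations by $(B^{D})^{k+1}$ and using $B^{k}(B^{D})^{k+1}=B^{D}$ yields $K^{*}KZ=Z$ and $L^{*}KZ=0$, and $Z\Sigma L=\Sigma K(B^{D})^{2}\Sigma L=0$ follows from $(B^{D})^{2}\Sigma L=0$, giving Lemma \ref{wC-EP}(2) and hence (1); conversely (1) makes all four displayed relations and $XAA^{\dag}=A^{D}$ routine. I expect the only delicate bookkeeping here to be the Drazin identities $B^{D}B^{k+1}=B^{k}$, $B^{k}(B^{D})^{k+1}=B^{D}$ and $\Sigma K(B^{D})^{2}=B^{D}$, while the genuine conceptual step of the whole argument is the rank/range deduction $Z=B^{D}$ inside $(2)\Rightarrow(1)$.
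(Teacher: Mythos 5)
Your proposal is correct, but it routes the equivalences differently from the paper, even though both arguments live in the same framework (the Hartwig--Spindelb\"ock decomposition, Lemma~\ref{wC-EP}, and the block formulas \eqref{AD}, \eqref{ADMP}, \eqref{AMPD}). The paper proves the single cycle $(1)\Rightarrow(2)\Rightarrow(3)\Rightarrow(4)\Rightarrow(1)$: its step $(2)\Rightarrow(3)$ is a pure block comparison that never needs to identify $Z$ itself --- it only extracts $K^{*}KZ=(\varSigma K)^{D}$, $L^{*}KZ=0$ and $(\varSigma K)^{D}\varSigma L=0$ and then compares \eqref{ACMP} with \eqref{AD} --- and its steps $(3)\Rightarrow(4)$ and $(4)\Rightarrow(1)$ are short decomposition-free identity chains built on $X=A^{D}AX$, $XAX=X$ and $AXA=A^{k+1}X^{k+1}A$, e.g.\ $XAA^{\dag}=A^{D}AXAA^{\dag}=(A^{\dag}AXAA^{\dag})AXAA^{\dag}=A^{\dag}AXAA^{\dag}=A^{D}$. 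You instead prove $(1)\Rightarrow(3)\Rightarrow(2)\Rightarrow(1)$ together with $(1)\Leftrightarrow(4)$, and your crux $(2)\Rightarrow(1)$ requires the stronger conclusion $Z=(\varSigma K)^{D}$; your mechanism for this --- the prescribed-range property of minimal rank weak Drazin inverses from \cite{WC2023}, giving $\mathcal{R}(Z)=\mathcal{R}((\varSigma K)^{D})$, so that $K^{*}K(\varSigma K)^{D}=(\varSigma K)^{D}$ forces $K^{*}KZ=Z$ and hence $Z=(\varSigma K)^{D}$ --- is sound, and it is an ingredient the paper's proof never invokes. Your $(1)\Leftrightarrow(4)$ also stays entirely in coordinates and therefore leans on the asserted fact $\operatorname{ind}(\varSigma K)=k-1$; this is true and standard, and follows in one line because $[\,\varSigma K\ \ \varSigma L\,]=\varSigma[\,K\ \ L\,]$ has full row rank, so $\operatorname{rank}(A^{j})=\operatorname{rank}((\varSigma K)^{j-1})$ for all $j\geq 1$, but you should justify or cite it rather than merely ``recall'' it. On balance, the paper's arrangement buys economy and partial coordinate-freeness (its last two legs would survive verbatim in more abstract ring/semigroup settings), while yours buys a direct proof of the implication $(2)\Rightarrow(1)$, which the paper only obtains by detouring through $(3)$ and $(4)$, and a uniform, purely computational treatment; both are complete proofs of the theorem.
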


\begin{proof}
	Let $A\in \mathbb{C}^{n\times n}$ be given by \eqref{AHS}. It follows from \cite{DMP} that
	\begin{equation}\label{AD}
		A^{D}=U\left[ \begin{matrix}
			(\varSigma K)^{D}	&	((\varSigma K)^{D})^{2}\varSigma L	\\
			0		&0		\\
		\end{matrix} \right] U^{*},
	\end{equation}
	\begin{equation}\label{ADMP}
		A^{D, \dag}=U\left[ \begin{matrix}
			(\varSigma K)^{D}	&	0	\\
			0		&0		\\
		\end{matrix} \right] U^{*},
	\end{equation}
	\begin{equation}\label{AMPD}
		A^{\dag,D}=U\left[ \begin{matrix}
			K^{*}K(\varSigma K)^{D}	&	K^{*}K((\varSigma K)^{D})^{2}\varSigma L	\\
			L^{*}K(\varSigma K)^{D}		& L^{*}K((\varSigma K)^{D})^{2}\varSigma L		\\
		\end{matrix} \right] U^{*}.
	\end{equation}
	
	(1)$\Rightarrow$(2): Suppose $A^{\dag}AXA=AXAA^{\dag}$. Then $K^{*}KZ=Z$, $L^{*}KZ=0$, $Z\varSigma L=0$
	and $Z=(\varSigma K)^{D}$ by Lemma \ref{wC-EP}. Thus, we obtain $K^{*}KZ=(\varSigma K)^{D}$ and $L^{*}KZ=0$, which implies that  $A^{\dag}AXAA^{\dag}=A^{D, \dag}$ by \eqref{ACMP} and \eqref{ADMP}. Moreover, since $K^{*}K(\varSigma K)^{D}=K^{*}KZ$, $L^{*}K(\varSigma K)^{D}=L^{*}KZ$, 
	$K^{*}K((\varSigma K)^{D})^{2}\varSigma L=0$ and $L^{*}K((\varSigma K)^{D})^{2}\varSigma L=0$, it follows that $A^{\dag,D}=A^{\dag}AXAA^{\dag}$  by \eqref{ACMP} and \eqref{AMPD}.

	(2)$\Rightarrow$(3): Suppose $A^{\dag}AXAA^{\dag}=A^{D, \dag}$ and $A^{\dag}AXAA^{\dag}=A^{\dag,D}$. Then by $A^{\dag}AXAA^{\dag}=A^{D, \dag}$ and \eqref{ACMP} and 
	\eqref{ADMP}, we obtain $K^{*}KZ=(\varSigma K)^{D}$ and $L^{*}KZ=0$. Since $A^{\dag}AXAA^{\dag}=A^{\dag,D}$, it follows that $K^{*}K((\varSigma K)^{D})^{2}\varSigma L=0$ and $L^{*}K((\varSigma K)^{D})^{2}\varSigma L=0$ by \eqref{ACMP} and \eqref{AMPD}. Pre-multiplying these two equations by $K$ and $L$  respectively, and using $KK^{*}+LL^{*}=I_{r}$, we obtain $K((\varSigma K)^{D})^{2}\varSigma L=0$. Thus, $(\varSigma K)^{D}\varSigma L=\varSigma K((\varSigma K)^{D})^{2}\varSigma L=0$. Therefore, by \eqref{ACMP} and \eqref{AD}, $A^{\dag}AXAA^{\dag}=A^{D}$.
	
	(3)$\Rightarrow$(4): Since $A^{\dag}AXAA^{\dag}=A^{D}$, it follows that
	$$XAA^{\dag}=A^{D}AXAA^{\dag}=(A^{\dag}AXAA^{\dag})AXAA^{\dag}=A^{\dag}AXAA^{\dag}=A^{D},$$
	and 
	$$A^{\dag}A^{k+1}=A^{\dag}AA^{k}=A^{\dag}AXA^{k+1}=A^{\dag}AXAA^{\dag}A^{k+1}=A^{D}A^{k+1}   =A^{k}.$$
	
	(4)$\Rightarrow$(1): Suppose $XAA^{\dag}=A^{D}$ and $A^{\dag}A^{k+1}=A^{k}$. Then $AXAA^{\dag}=AA^{D}$, and $A^{\dag}AXA=A^{\dag}A^{k+1}X^{k+1}A=A^{k}X^{k+1}A=XA=XAA^{\dag}A=A^{D}A$. Thus, $A^{\dag}AXA=AXAA^{\dag}$.
	
	\begin{remark}
		Let  $A$ be a complex matrix with index $k$. Recall that  $A$ is called $k$-EP \cite{EPmatrix} if it satisfies $A^{\dagger}A^{k}=A^{k}A^{\dagger}$.
		It was proved in \cite[Corollary 3.18]{F1} that $A^{c,\dag}=A^{D}=A^{D, \dag}=A^{\dag,D}$  if and only if $A$ is $k$-EP matrix.
		As  one side case of  $k$-EP matrix, it was proved in \cite{WuChen} that $A$ is left $k$-EP (or left power-EP)  if and only if $A^{\dagger}A^{k+1}=A^{k}$. Then by Theorem \ref{main}, $A^{\dag}AXAA^{\dag}=A^{D}=A^{D, \dag}=A^{\dag,D}$ if and  only if  $A$ is left $k$-EP and $XAA^{\dag}=A^{D}$ .
	\end{remark}	
\end{proof}

In \cite{D2023}, Drazin  showed that in fact the DMP and CMP inverses are both special cases of the class of Bott-Duffin $(e,f)$-inverses\textemdash strong Bott-Duffin $(e,f)$-inverses.

\begin{definition}\cite[Definition 2.4]{D2023}
	Given any semigroup \( S \) and any two idempotents \( e, f \in S \), we call an element \( a \in S \) \emph{strongly Bott--Duffin $(e, f)$-invertible} if there exists \( y \in S \) such that \( yay = y \), \( ya = e \) and \( ay = f \).
\end{definition}
If any such \( y \) exists, then it is unique and  called  the strong Bott--Duffin $(e, f)$-inverse of \( a \). 

Inspired by this, we investigate the relationship between weak CMP inverse (resp. weak MPD inverse, weak DMP inverse) and strong Bott-Duffin $(e,f)$-inverse.

\begin{proposition}
	Let $A\in \mathbb{C}^{n\times n}$  and $X$  be a minimal rank weak
	Drazin inverse of $A$. If $Y=A^{\dag}AXAA^{\dag}$ , then $Y$ is a strong Bott-Duffin $(A^{\dag}AXA, AXAA^{\dag})$-inverse of $A$.
\end{proposition}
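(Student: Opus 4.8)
The plan is to verify directly the three defining equations of a strong Bott--Duffin $(e,f)$-inverse, namely $YAY=Y$, $YA=A^{\dag}AXA$ and $AY=AXAA^{\dag}$, where $Y=A^{\dag}AXAA^{\dag}$. Since the proposition asserts that $Y$ is the strong Bott--Duffin inverse with respect to the specific idempotents $e=A^{\dag}AXA$ and $f=AXAA^{\dag}$, the first task is to confirm that these two matrices are in fact idempotents; only then does the claim make sense against the definition quoted from \cite{D2023}.

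First I would check idempotency of $e=A^{\dag}AXA$ and $f=AXAA^{\dag}$. The key algebraic input is that $X$ is a minimal rank weak Drazin inverse of $A$, so $XA^{k+1}=A^{k}$, together with the Moore--Penrose identities $AA^{\dag}A=A$ and $A^{\dag}AA^{\dag}=A^{\dag}$. In the proof of Proposition~\ref{CD} it was already observed that $XAA^{\dag}$ is again a minimal rank weak Drazin inverse of $A$, in particular $A(XAA^{\dag})(XAA^{\dag})=XAA^{\dag}$; facts of exactly this flavour will let me collapse the repeated factors. For instance, to show $f^{2}=f$ I expect to compute $AXAA^{\dag}\cdot AXAA^{\dag}=AXA\cdot A^{\dag}AX\cdot AA^{\dag}$ and reduce the middle block using $A^{\dag}A$ acting on $XA$-type terms; the analogous reduction handles $e^{2}=e$.

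Next I would establish the two ``projector'' equations $YA=e$ and $AY=f$. These are almost immediate: $YA=A^{\dag}AXAA^{\dag}A=A^{\dag}AXA$ because $A^{\dag}A$ is a projector and $A^{\dag}AA^{\dag}A=A^{\dag}A$ absorbs the trailing $A^{\dag}A$, giving exactly $e$; symmetrically $AY=AA^{\dag}AXAA^{\dag}=AXAA^{\dag}=f$ using $AA^{\dag}A=A$ on the left. Finally the absorption equation $YAY=Y$ follows by combining these: $YAY=eY=A^{\dag}AXA\cdot A^{\dag}AXAA^{\dag}$, which I reduce to $A^{\dag}AXAA^{\dag}=Y$ using $XA\cdot A^{\dag}AX=XAX$-type cancellations coming from $AXAX=XA$ (a consequence of $X$ being a weak Drazin inverse, as exploited in Proposition~\ref{CD}). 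Uniqueness is automatic once existence is shown, by the remark following the definition.

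The main obstacle is the idempotency verification and the $YAY=Y$ reduction, since both require carefully exploiting the weak Drazin identity $XA^{k+1}=A^{k}$ rather than a genuine Drazin relation $AX=XA$, which does \emph{not} hold here. The clean way around this is to route every simplification through the fact that $XAA^{\dag}$ is itself a minimal rank weak Drazin inverse (already proved in Proposition~\ref{CD}) and through the idempotents $AY$, $YA$ whose projector descriptions $AY=P_{\mathcal{R}(A^{k}),\,\mathcal{N}(XAA^{\dag})}$ and $YA=P_{\mathcal{R}(A^{\dag}A^{k}),\,\mathcal{N}(XA)}$ are recorded earlier from \cite[Lemma 2.1]{WCMP}; that these are genuine projectors gives $e^{2}=e$ and $f^{2}=f$ for free, bypassing the block computation entirely.
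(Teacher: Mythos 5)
Your overall architecture is sound and is in fact what the paper's own (cited) proof does: the paper disposes of this proposition by appealing to the proof of \cite[Theorem 2.3]{D2023}, which is precisely a direct verification of $YA=e$, $AY=f$, $YAY=Y$ together with idempotency of $e$ and $f$. Your reductions $YA=A^{\dag}AX(AA^{\dag}A)=A^{\dag}AXA$ and $AY=(AA^{\dag}A)XAA^{\dag}=AXAA^{\dag}$ are correct, and your fallback for idempotency of $e=YA$ and $f=AY$ via the projector descriptions in \cite[Lemma 2.1]{WCMP} is legitimate. The genuine flaw is the identity you invoke to close the remaining steps: $AXAX=XA$ is \emph{false} in general, and its attribution to ``$X$ being a weak Drazin inverse'' is also wrong. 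What all of your cancellations actually need is the outer-inverse identity $XAX=X$. Note that $XAX=X$ makes $AX$ idempotent, so $AXAX=AX$; hence your claimed identity would force $AX=XA$, which fails for minimal rank weak Drazin inverses: in the first example of Section 3 of the paper (where the nonzero rows of $A$ are $(1,1,0,1)$ and $(0,0,0,-1)$, and the nonzero row of $X$ is $(1,1,0,1)$) one has $AX=X$ while $XA\neq X$, so $AXAX=X\neq XA$.

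The repair is immediate but must be stated with the correct hypothesis: $XAX=X$ holds \emph{because $X$ has minimal rank}, not because it is a weak Drazin inverse --- minimal rank weak Drazin inverses are outer inverses (this is the content of \cite{WC2023}, and the paper itself uses $Z(\Sigma K)Z=Z$ in its first proposition and $XAX$-cancellations throughout the proof of Proposition \ref{CD}), whereas a general weak Drazin inverse need not satisfy it (for $A$ nilpotent, $X=I$ is a weak Drazin inverse with $XAX\neq X$ whenever $A \neq 0$). With $XAX=X$ in hand, every step you sketched goes through without any appeal to the false identity: $e^{2}=A^{\dag}AX(AA^{\dag}A)XA=A^{\dag}A(XAX)A=e$, $f^{2}=AX(AA^{\dag}A)XAA^{\dag}=A(XAX)AA^{\dag}=f$, and $YAY=A^{\dag}AXA\cdot A^{\dag}AXAA^{\dag}=A^{\dag}A(XAX)AA^{\dag}=Y$, after which uniqueness is automatic from the definition. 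So the gap is in the justification of the key cancellation, not in the architecture; replace $AXAX=XA$ by the minimal-rank property $XAX=X$ and your proof is complete and coincides with the argument the paper cites.
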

\begin{proof}
	It follows by the proof of \cite[Theorem 2.3]{D2023}.
\end{proof}

\begin{proposition}\label{wMPD-BD}
	Let $A\in \mathbb{C}^{n\times n}$  and $X$  be a minimal rank weak
	Drazin inverse of $A$. If $Y=A^{\dag}XA$ , then $Y$ is a strong Bott-Duffin $(A^{\dag}XA^{2}, AA^{\dag}XA)$-inverse of $A$.
\end{proposition}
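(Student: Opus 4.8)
The plan is to verify directly the three defining relations of a strong Bott--Duffin inverse for the weak MPD inverse $Y = A^{\dag}XA$, with the prescribed idempotents $e = A^{\dag}XA^{2}$ and $f = AA^{\dag}XA$. Two of the relations are immediate from the shape of $Y$: post-multiplying by $A$ gives $YA = A^{\dag}XA\cdot A = A^{\dag}XA^{2} = e$, and pre-multiplying by $A$ gives $AY = A\cdot A^{\dag}XA = AA^{\dag}XA = f$. Hence the entire substance of the statement reduces to the outer relation $YAY = Y$, together with the fact that the chosen $e$ and $f$ are genuinely idempotent (which is what the definition demands of the pair).

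To establish $YAY = Y$ I would rely on two structural facts about a minimal rank weak Drazin inverse $X$. First, $\mathcal{R}(X)\subseteq\mathcal{R}(A)$, which is visible from the Hartwig-Spindelb\"ock form \eqref{AXHS} since its lower block-row vanishes; because $AA^{\dag}$ is the orthogonal projector onto $\mathcal{R}(A)$, this yields $AA^{\dag}X = X$. Second, $X$ is an outer inverse, $XAX = X$, which follows from \eqref{AXHS} using $Z(\Sigma K)Z = Z$ and $\mathcal{R}(W)\subseteq\mathcal{R}(Z)$ (equivalently, cite \cite{WC2023}). Granting these, I regroup
\[
YAY = A^{\dag}XA\,A\,A^{\dag}XA = A^{\dag}XA\,(AA^{\dag}X)\,A = A^{\dag}XA\,X\,A = A^{\dag}(XAX)A = A^{\dag}XA = Y,
\]
where the third equality applies $AA^{\dag}X = X$ and the fifth applies $XAX = X$.

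Once $YAY = Y$ is available, the idempotency of $e$ and $f$ costs nothing and needs no further structure: $e^{2} = (YA)(YA) = (YAY)A = YA = e$ and $f^{2} = (AY)(AY) = A(YAY) = AY = f$. This confirms that $e$ and $f$ are idempotents and that $Y$ satisfies $YAY = Y$, $YA = e$, $AY = f$, so $Y$ is the strong Bott--Duffin $(e,f)$-inverse of $A$.

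I expect the single genuine obstacle to be the reorganization in the displayed computation of $YAY$. The naive expansion produces the factor $AAA^{\dag}$, which is \emph{not} equal to $A$ in general (indeed $AAA^{\dag}=A$ forces $L=0$ in the decomposition \eqref{AHS}), so one cannot simply cancel it. The decisive move is to keep the projector $AA^{\dag}$ adjacent to $X$ and invoke $AA^{\dag}X = X$, rather than attempting to simplify $AAA^{\dag}$ on its own; after that the collapse to $XAX = X$ is forced, and everything else is formal.
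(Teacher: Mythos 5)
Your proof is correct, and it distributes the work differently from the paper's. The paper does its computational heavy lifting on the idempotent side: it verifies $E^2=E$ and $F^2=F$ for $E=A^{\dag}XA^{2}$, $F=AA^{\dag}XA$ by direct manipulation, using the identities $X=AX^{2}$ and $XAX=X$ together with $AA^{\dag}A=A$, and then settles $YAY=Y$ by appealing to the known outer-inverse property of the weak MPD inverse from \cite{WDMP}. You go the other way around: you prove $YAY=Y$ from scratch, using the two structural facts $AA^{\dag}X=X$ (valid, since $\mathcal{R}(X)=\mathcal{R}(A^{k})\subseteq\mathcal{R}(A)$, and visible in the block form \eqref{AXHS} exactly as you say) and $XAX=X$ (your verification via $Z(\Sigma K)Z=Z$ and $\mathcal{R}(W)\subseteq\mathcal{R}(Z)$ is right), and then obtain the idempotency of $e=YA$ and $f=AY$ formally from $e^{2}=(YAY)A=YA$ and $f^{2}=A(YAY)=AY$. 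This makes the paper's two displayed computations unnecessary, so your argument is both self-contained (no citation needed for $YAY=Y$) and shorter; your remark that one must keep $AA^{\dag}$ glued to $X$ rather than try to simplify $AAA^{\dag}$ identifies precisely the step where a naive computation would stall. What the paper's route buys instead is reuse of the identity $X=AX^{2}$, which it also exploits in the proof of Proposition \ref{CD}, and economy by leaning on an already-established property of weak MPD inverses rather than reproving it. Both arguments are sound; yours is arguably the cleaner exposition of this particular proposition.
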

\begin{proof}
	Let $E=A^{\dag}XA^{2}$ and $F=AA^{\dag}XA$. Since
	\begin{align*}
	E^{2}&=A^{\dag}XA^{2}A^{\dag}XA^{2}=A^{\dag}XA^{2}A^{\dag}AX^{2}A^{2}\\
	&=A^{\dag}XA^{2}X^{2}A^{2}=A^{\dag}XAXA^{2}=A^{\dag}XA^{2}
	\end{align*}
	and
	 \begin{align*}
	 	F^{2}&=AA^{\dag}XAAA^{\dag}XA=AA^{\dag}XAAA^{\dag}AX^{2}A\\
	 	&=AA^{\dag}XAAX^{2}A=AA^{\dag}XAXA=AA^{\dag}XA,
	 \end{align*}
	it follows that $E$ and $F$ are both  idempotents. 
	Moreover, a direct calculation shows that $YA=A^{\dag}XA^{2}=E$ and $AY=AA^{\dag}XA=F$. 
	Also, $YAY=Y$ follows by the definition of weak MPD inverse. Thus, $Y$ is a strong Bott-Duffin $(A^{\dag}XA^{2}, AA^{\dag}XA)$-inverse of $A$.
	
\end{proof}

Similarly, we obtain the following relationship between   weak DMP inverse and strong Bott-Duffin $(e,f)$-inverse.

\begin{proposition}
	Let $A\in \mathbb{C}^{n\times n}$  and $X$  be a minimal rank right weak
	Drazin inverse of $A$. If $Y=AXA^{\dag}$ , then $Y$ is a strong Bott-Duffin $(AXA^{\dag}A, A^{2}XA^{\dag})$-inverse of $A$.
\end{proposition}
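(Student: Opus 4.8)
The plan is to mirror the proof of Proposition \ref{wMPD-BD}, working with the two candidate idempotents $E = AXA^\dagger A$ and $F = A^2XA^\dagger$ and verifying the defining data of a strong Bott-Duffin inverse: that $E$ and $F$ are idempotent, and that $YA=E$, $AY=F$ and $YAY=Y$. The relations $YA=AXA^\dagger A=E$ and $AY=A\cdot AXA^\dagger=A^2XA^\dagger=F$ are immediate from $Y=AXA^\dagger$, so the whole statement reduces to the three identities $E^2=E$, $F^2=F$ and $YAY=Y$.

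First I would record the two structural identities satisfied by a minimal rank right weak Drazin inverse, namely $XAX=X$ and $X^2A=X$ (whence $X^2A^2=XA$). These are the mirror images of the identities $XAX=X$ and $AX^2=X$ used tacitly in the proof of Proposition \ref{wMPD-BD} for a minimal rank (left) weak Drazin inverse, and they transfer to the right-handed case by the duality $A^{k+1}X=A^k \Leftrightarrow X^*(A^*)^{k+1}=(A^*)^k$ together with $\mathrm{rank}(X)=\mathrm{rank}(X^*)$ and $(A^*)^D=(A^D)^*$: thus $X$ is a minimal rank right weak Drazin inverse of $A$ exactly when $X^*$ is a minimal rank weak Drazin inverse of $A^*$, and conjugate-transposing $X^*A^*X^*=X^*$ and $A^*(X^*)^2=X^*$ yields $XAX=X$ and $X^2A=X$. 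Alternatively, both can be read off directly from the explicit block form of $X$ in a core-nilpotent (or Hartwig-Spindelb\"ock) decomposition of $A$.

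The heart of the argument is then the single identity
\[
XA^\dagger A^2X=X,
\]
which I would obtain from the chain $XA^\dagger A^2X=X^2AA^\dagger A^2X=X^2A^2X=XAX=X$, using $X=X^2A$, then the Moore-Penrose relation $AA^\dagger A^2=A^2$, then $X^2A^2=XA$, and finally $XAX=X$. Once this is available, the three remaining relations follow by inserting it into the middle of the relevant products: $YAY=A(XA^\dagger A^2X)A^\dagger=AXA^\dagger=Y$, while $E^2=A(XA^\dagger A^2X)A^\dagger A=AXA^\dagger A=E$ and $F^2=A^2(XA^\dagger A^2X)A^\dagger=A^2XA^\dagger=F$, so that $E$ and $F$ are idempotent. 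Hence $YA=E$, $AY=F$ and $YAY=Y$, and by the definition of a strong Bott-Duffin $(e,f)$-inverse, $Y$ is the strong Bott-Duffin $(AXA^\dagger A,\,A^2XA^\dagger)$-inverse of $A$.

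The step I expect to need the most care is the justification of $XAX=X$ and $X^2A=X$, since these are not isolated as a lemma in the paper; I would either invoke the conjugate-transpose duality with the left-handed identities already employed for Proposition \ref{wMPD-BD}, or verify them once from the block form of a minimal rank right weak Drazin inverse. Everything after that is formal manipulation collapsing onto the identity $XA^\dagger A^2X=X$ and presents no real difficulty.
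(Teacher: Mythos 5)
Your proposal is correct and takes essentially the same route as the paper, whose proof of this proposition is literally ``similar to Proposition \ref{wMPD-BD}'': you carry out exactly that dualization, with the right-handed identities $XAX=X$ and $X^{2}A=X$ replacing the left-handed $XAX=X$ and $AX^{2}=X$ used tacitly there. Your consolidation of the verifications of $E^{2}=E$, $F^{2}=F$ and $YAY=Y$ into the single identity $XA^{\dagger}A^{2}X=X$ is a mild streamlining of the same computation, not a different method.
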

\begin{proof}
	The proof  is similar to Proposition \ref{wMPD-BD}.
\end{proof}

\section*{Acknowledgment}
This research was supported by the National Natural Science Foundation of China (No. 12171083) and the Jiangsu Provincial Scientific Research Center of Applied Mathematics under Grant No. BK20233002. We thank Dr. Yukun Zhou for meaningful suggestions.

\end{document}